\documentclass[12pt,centertags]{amsart}
\usepackage{amsmath,amstext,amsthm,a4,amssymb,amscd}
\usepackage[mathscr]{eucal}
\usepackage{mathrsfs}
\usepackage{epsf}
\textwidth 15.8cm \textheight 23.5cm \topmargin -0cm \oddsidemargin
0.1cm \evensidemargin 0.1cm
\parskip 0.0cm

\numberwithin{equation}{section}

\newtheorem{thm}{Theorem}[section]
\newtheorem{lemma}[thm]{Lemma}
\newtheorem{prop}[thm]{Proposition}

\theoremstyle{definition}

\theoremstyle{definition}
\newtheorem{defn}[thm]{Definition}
\newcommand{\be}{\begin{eqnarray}}
\newcommand{\ee}{\end{eqnarray}}

\newcommand{\comment}[1]{}

\begin{document}

\title{Equivariant Morse inequalities and applications}
\author{Wen Lu}

\address{Mathematisches Institut,
Universit\"at zu K\"oln, Weyertal 86-90, 50931 K\"oln, Germany}
\email{wlu@math.uni-koeln.de}

\date{\today}

\begin{abstract}
In this paper, we prove equivariant Morse inequalities via Bismut-Lebeau's
analytic localization techniques. As an application, we obtain Morse inequalities on compact
manifold with nonempty boundary by applying equivariant Morse inequalities to the doubling manifold.
\end{abstract}

\maketitle

\section{Introduction}\label{s5}

In his influential work \cite{Witten82}, Witten sketched analytic proofs of the degenerate
Morse inequalities of Bott \cite{Bott54} for Morse functions whose critical submanifolds
are nondegenerate in the sense of Bott.
Rigorous proofs were given by Bismut \cite{Bismut86}, by using heat kernel methods,
and later by Helffer and Sj\"ostrand \cite{Hellfer88}, by means of semiclassical analysis.
Braverman and Farber \cite{Braverman97} provided another proof using the Witten
deformation techniques suggested by Bismut \cite{Bismut86}.

Concerning the standard Morse inequalities (i.e., for Morse functions with
isolated critical points), an analytic proof is given by Zhang \cite[Chap.\,5]{Zhang01}, in the spirit
of the analytic localization techniques developed by Bismut-Lebeau \cite[\S 8-9]{Bismut74}.
Moreover, \cite[Chap.\,6]{Zhang01} contains a complete proof of the isomorphism between
the Thom-Smale complex and the Witten instanton complex. Following the ideas
in \cite{Zhang01}, we give here a proof of degenerate Morse inequalities by similar techniques.

Let us mention the related papers \cite{Braverman97,Braverman06,Feng01}.
In \cite{Braverman97, Braverman06}, Braverman, Farber and Silantyev
used Witten deformation techniques to study the Novikov number
associated to closed differential 1-forms nondegenerate in the sense of Bott and Kirwan, respectively.
In this way, they obtained
Novikov-type inequalities associated to a closed differential 1-form.
When the closed differential form is exact, these inequalities turn
to Morse inequalities.

In \cite{Feng01}, Feng and Guo establish Nivokov's type inequalities
associated to vector fields instead of closed differential forms
under a natural assumption on the zero-set of the vector field.

In this paper, we work out equivariant Morse-Bott inequalities along the lines of
\cite{Zhang01} (cf. \cite[\S8-9]{Bismut74}).
Compared to \cite{Feng01}, where Bismut-Lebeau's analytic localization techniques are applied along
the lines of \cite{Zhang01}, we can choose the geometrical data near
the singular points as simple as possible,
due to the equivariant Morse's Lemma \cite{Wasserman69}. As an application,
we get degenerate Morse inequalities for manifolds with nonempty boundary
by passing to the doubling manifold. Thus, we extend the result from
\cite{Zadeh10} to the most general situation.

Let $M$ be a smooth $m$-dimensional closed and connected
manifold, and let $G$ be a finite group acting smoothly on $M$.
Let $f: M\rightarrow \mathbb{R}$ be a smooth $G$-invariant
Morse-Bott function \cite{Bott54}. This means that the critical points of
$f$ form a union of disjoint connected submanifolds $Y_{1}, \ldots, Y_{r'}$
such that for every $x\in Y_{i}$ the Hessian of $f$ is
nondegenerate on all subspaces of $T_{x}M$ intersecting
$T_{x}Y_{i}$ transversally. One verifies directly that the index of the Hessian of $f$ is constant
on any orbit $G\cdot Y_{i}$. Set $\big\{B_{1},
\ldots, B_{r}\big\}=\big\{G\cdot Y_{1}, \ldots, G\cdot Y_{r'}\big\}$, $r\leqslant r'$, where $B_{1}, \ldots, B_{r}$
are pointwise disjoint orbits.
Then $B_{i}$ is a $G$-invariant submanifold of $M$. For $1\leqslant i\leqslant r$,
let $n_{i}$ be the dimension of the submanifold $B_{i}$ and $n_{i}^{-}$ be the index of
the Hessian of $f$ on $B_{i}$.

Using the equivariant Morse's Lemma \cite{Wasserman69}, we embed each critical submanifold
$B_{i}$ in a $G$-invariant tubular neighborhood $(h, N^{-}_{i}\oplus N_{i}^{+})$ of $B_{i}$
such that $h$ equivariantly embeds $N^{-}_{i}\oplus N^{+}_{i}$ into $M$.
Moreover, there is an open $G$-invariant neighborhood
$\mathcal {B}_{i}$ of $B_{i}$ in $N^{-}_{i}\oplus N_{i}^{+}$ such that if $Z=(Z^{-}, Z^{+})\in
\mathcal {B}_{i}$, then
\begin{align}\label{1.1} f\circ h(Z^{-},
Z^{+})=c-\frac{|Z^{-}|^2}{2}+\frac{|Z^{+}|^2}{2},
\end{align}
where $c$ denotes the value of the constant fundtion $f|_{B_{i}}$. The rank of $N^{-}_{i}$ is $n^{-}_{i}$, while that of
$N^{+}_{i}$ is $m-n_{i}-n^{-}_{i}$. Let $o(N_{i}^{-})$ denote the
orientation bundle of $N_{i}^{-}$. We call $n^{-}_{i}$ the index of
$B_{i}$ in $M$.

In the sequel, we will often omit the subscript $i$ in $B_{i},
n_{i}, n_{i}^{-}$, i.e., $n$ denotes the dimension of the critical
submanifold $B$ and $n^{-}$ is the index.
Denote by $o(N^{-})$ the orientation bundle of $N^{-}$ over $B$.

Let $W_{1}, W_{2}$ be two finite-dimensional $G$-representations.
A morphism between $W_{1}$ and $W_{2}$ is a linear map which is
$G$-equivariant. Let $\textrm{Hom}_{G}(W_{1}, W_{2})$ denote the set of
all morphism between $W_{1}$ and $W_{2}$. If $E_{1}, E_{2}$ are two
finite-dimensional representations of $G$, then we say that
\begin{align}\label{1.2}
E_{1}\leqslant E_{2}
\end{align} in the representation ring
$R(G)$ if for any irreducible representation $V$ of $G$, the
multiplicity of $V$ in $E_{1}$ is smaller than the multiplicity of
$V$ in $E_{2}$, equivalently,
\begin{align}\label{1.3}\textrm{dim}\
\textrm{Hom}_{G}(V,E_{1})\leqslant \textrm{dim}\
\textrm{Hom}_{G}(V,E_{2}).
\end{align}

Denote by $\Omega^{i}(B, o(N^{-}))$ the space of smooth differential
$i$-forms on $B$ with values in $o(N^{-})$. Set
$\Omega(B,o(N^{-}))=\bigoplus^{n}_{i=0}\Omega^{i}(B, o(N^{-}))$. Let $d^{B}$ denote
the exterior differential on $\Omega(B, o(N^{-}))$ induced by the flat connection $\nabla^{o(N^{-})}$
on $o(N^{-})$. Denote by $H^{\bullet}(B, o(N^{-}))$ the cohomology of
the de Rham complex $\big(\Omega(B, o(N^{-})), d^{B}\big)$.
Let $H^{\bullet}(M)$ denote the de Rham cohomology groups of $M$.

The main result of this paper is as follows.
\begin{thm} \label{t1.1}
Let $M$ be a smooth $m$-dimensional closed and connected
manifold, and let $G$ be a finite group acting smoothly on $M$.
Let $f: M\rightarrow \mathbb{R}$ be a smooth $G$-invariant
Morse-Bott function.
Then we have for $k=0,  1,  \ldots, m$,
\begin{align}\label{1.4}
\sum_{j=0}^{k}(-1)^{k-j}H^{j}(M)\leqslant
\sum_{i=1}^{r}\sum_{j=n_{i}^{-}}^{k}(-1)^{k-j}H^{j-n_{i}^{-}}(B_{i},
o(N^{-}_{i})).
\end{align}  \noindent in the sense of (\ref{1.2}). When
$k=m$, the equality holds,
\begin{align}\label{1.5}
\sum_{j=0}^{m}(-1)^{m-j}H^{j}(M) =
\sum_{i=1}^{r}\sum_{j=n_{i}^{-}}^{m}(-1)^{m-j}H^{j-n_{i}^{-}}(B_{i},
o(N^{-}_{i})).
\end{align} \end{thm}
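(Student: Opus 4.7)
The plan is to adapt Zhang's analytic-localization proof of the standard Morse inequalities (\cite[Chap.\,5]{Zhang01}, in the spirit of \cite[\S\,8-9]{Bismut74}) to the equivariant Morse-Bott setting, using Wasserman's equivariant Morse Lemma to place $f$ in the $G$-invariant normal form (\ref{1.1}) on each tubular neighbourhood $\mB_{i}$. Introduce the Witten-deformed differential $d_{T}=e^{-Tf}\,d\,e^{Tf}$, its formal adjoint $d_{T}^{*}$, and the Laplacian $\Delta_{T}=(d_{T}+d_{T}^{*})^{2}=\Delta+T^{2}|df|^{2}+T\mathcal{L}$, with $\mathcal{L}$ a zeroth-order operator involving the Hessian of $f$. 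Since $f$ is $G$-invariant, all of these operators commute with the $G$-action and their eigenspaces are $G$-representations; conjugation by $e^{Tf}$ identifies $(\Omega^{\bullet}(M),d_{T})$ with the de Rham complex, so its cohomology is $H^{\bullet}(M)$ as a graded $G$-module.

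Next, build a model operator $\widetilde{\Delta}_{i,T}$ on the total space of $N_{i}^{-}\oplus N_{i}^{+}$. Choosing a $G$-invariant metric on $\mB_{i}$ compatible with the splitting and with the fibre Euclidean metrics, formula (\ref{1.1}) forces $\widetilde{\Delta}_{i,T}$ to split, up to lower-order terms, as a fibrewise harmonic-oscillator Witten Laplacian plus the de Rham Laplacian on $B_{i}$. The fibre ground state is one-dimensional, spanned by a Gaussian multiplied by the top exterior form of $(N_{i}^{-})^{*}$; because any $g\in G$ reversing the orientation of $N_{i}^{-}$ acts on this top form by $-1$, the ground-state bundle is the orientation bundle $o(N_{i}^{-})$, concentrated in fibre-form degree $n_{i}^{-}$. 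Coupling with the de Rham operator on $B_{i}$ yields a $G$-equivariant identification
\[
\ker(\widetilde{\Delta}_{i,T})\;\simeq\;H^{\bullet-n_{i}^{-}}(B_{i},o(N_{i}^{-}))
\]
of $\Z$-graded $G$-modules, separated from the rest of the model spectrum by a gap of order $T$ in the normal directions.

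For the global operator, $|df|^{2}$ is bounded below by a positive constant on $M\setminus\bigsqcup_{i}\mB_{i}$, so $\Delta_{T}$ has lower bound of order $T^{2}$ there. A Bismut-Lebeau type partition-of-unity argument with $G$-invariant cut-offs, as in \cite[Chap.\,5]{Zhang01}, then yields, for $T$ sufficiently large, a $G$-equivariant isomorphism
\[
F_{T}^{[0,1]}\;\simeq\;\bigoplus_{i=1}^{r}\ker(\widetilde{\Delta}_{i,T})\;\simeq\;\bigoplus_{i=1}^{r}H^{\bullet-n_{i}^{-}}(B_{i},o(N_{i}^{-}))
\]
of $\Z$-graded $G$-modules, where $F_{T}^{[0,1]}\subset\Omega^{\bullet}(M)$ denotes the sum of eigenspaces of $\Delta_{T}$ with eigenvalue in $[0,1]$. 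Since $d_{T}$ commutes with $\Delta_{T}$, $F_{T}^{[0,1]}$ is a finite-dimensional $G$-invariant $d_{T}$-subcomplex of $\Omega^{\bullet}(M)$ whose inclusion is a quasi-isomorphism, so its cohomology equals $H^{\bullet}(M)$ as a $G$-module.

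The theorem then follows from the algebraic strong Morse inequalities applied isotypically to $F_{T}^{[0,1]}$: for each irreducible representation $V$ of $G$, applying $\dim\Hom_{G}(V,\cdot)$ to the graded pieces of the displayed isomorphism gives (\ref{1.4}) and its equality case (\ref{1.5}) in the sense of (\ref{1.2}). The main obstacle is the equivariant analytic localization in the previous paragraph: $G$-invariance of all geometric data is guaranteed by the equivariant Morse Lemma and the choice of $G$-invariant cut-offs, but the quantitative exponential decay of small eigenmodes toward the critical set, the IMS-type estimates on commutators of cut-offs with $\Delta_{T}$, and the min-max comparison of $F_{T}^{[0,1]}$ with $\bigoplus_{i}\ker(\widetilde{\Delta}_{i,T})$ all require careful deployment of Bismut-Lebeau's machinery in the presence of a positive-dimensional critical set.
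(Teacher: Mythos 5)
Your proposal is correct and follows essentially the same route as the paper: Witten deformation, the equivariant Morse Lemma to fix a normal form, a local fibrewise harmonic-oscillator model whose ground state carries $o(N_i^-)$ and couples to the twisted de Rham operator on $B_i$, Bismut--Lebeau analytic localization to count the small-eigenvalue eigenforms, $G$-equivariance of all operators and cut-offs, and the algebraic strong Morse inequalities applied isotypically. This is precisely the strategy of Sections 2.1--2.6 of the paper (Propositions \ref{t2.3}, \ref{t2.2}, Theorems \ref{t2.8}, \ref{t2.9}, Lemma \ref{t2.13}, Proposition \ref{t2.29}, Lemma \ref{t3.1}).
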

\noindent
Let us explain Theorem \ref{t1.1} in more detail.
Set
\begin{align}\label{1.6}
F_{j}=\bigoplus_{i=1}^{r} H^{j-n_{i}^{-}}(B_{i},
o(N^{-}_{i})), \ \ q_{j}=\sum_{i=1}^{r}\textrm{dim}\ H^{j-n_{i}^{-}}(B_{i},
o(N^{-}_{i})).
\end{align}
Let $\{V^{\alpha}\}^{l_{0}}_{\alpha=1}$ be the finite set of irreducible representations
of $G$.
As representation spaces of $G$, $F_{j}$ and $H^{j}(M)$ have the
following decompositions:
\begin{align}\label{1.7}
F_{j}=\bigoplus^{l_0}_{\alpha=1} \textrm{Hom}_{G}(V^{\alpha},
F_{j})\otimes V^{\alpha},\ \ H^{j}(M)=\bigoplus^{l_{0}}_{\alpha=1} \textrm{Hom}_{G}(V^{\alpha},
H^{j}(M))\otimes V^{\alpha}.
\end{align}
For $k=0, 1, \ldots, m, \alpha=1,
\ldots, l_{0}$, set
\begin{align}\label{1.8}
d_{j}^{\alpha}=\textrm{dim} \ \textrm{Hom}_{G}(V^{\alpha}, F_{j}), \ \
b^{\alpha}_{j}=\textrm{dim} \
\textrm{Hom}_{G}(V^{\alpha}, H^{j}(M))
\end{align}
Then (\ref{1.4}) is equivalent to
\begin{align}\label{1.9}\sum_{j=0}^{k}(-1)^{k-j}b_{j}^{\alpha}\leqslant
\sum_{j=0}^{k}(-1)^{k-j}d_{j}^{\alpha}, \end{align} and (\ref{1.5})
is equivalent to:
\begin{align}\label{1.10}\sum_{j=0}^{m}(-1)^{m-j}b_{j}^{\alpha}=
\sum_{j=0}^{m}(-1)^{m-j}d_{j}^{\alpha}.
\end{align}

From the equivariant Morse inequalities (\ref{1.9}) and
(\ref{1.10}), we will obtain the Morse inequalities for manifolds with
nonempty boundary. This goes like follows.
Let $M$ be a smooth $m$-dimensional connected orientable manifold with nonempty
boundary $\partial M$. Let $f: M\rightarrow \mathbb{R}$ be a smooth
function which is a Morse-Bott function in the interior of
$M$. Let $f|_{\partial M}$ be restriction of $f$ to the boundary.
We also assume the following condition. Let
$\partial M=N_{+}\sqcup N_{-}$ be a disjoint union of closed
manifolds such that $f(u, y)=\frac{1}{2}u^{2}+f_{+}(y)$ in a collar
neighborhood $N_{+}\times [0, \eta)$ of $N_{+}$, while $f(u,
y)=-\frac{1}{2}u^{2}+f_{-}(y)$ in a collar neighborhood $N_{-}\times [0,
\eta)$ of $N_{-}$, where $f_{+}$ (resp. $f_{-}$) is a Morse-Bott
function on $N_{+}$ (resp. $N_{-}$). This implies that $f|_{\partial M}$ is also
a Morse-Bott function.

Let $N_{+}=N_{a+}\sqcup N_{r+}$ and $N_{-}=N_{a-}\sqcup N_{r-}$ be
disjoint union of closed manifolds. The subscripts $"a"$ and $"r"$
refer to absolute and relative boundary conditions, respectively. Set
$N_{a}=N_{a+}\sqcup N_{a-}, N_{r}=N_{r+}\sqcup N_{r-}$. The Riemannian
metric is assumed to take the
product form $g^{TM}=g^{T\partial M}\oplus
d^{2}u$ in the collar neighborhood $\partial M\times [0, \eta)$,
where $g^{T\partial M}$ is a Riemannian metric on
$\partial M$.

Let $\{B_{i}\}^{r}_{i=1}$ (resp. $\{S_{+, i}\}^{t_{+}}_{i=1}$, resp.
$\{S_{-, i}\}^{t_{-}}_{i=1}$) be the critical submanifolds of $f$ in
the interior of $M$ (resp. of $f_{+}$ on $N_{+}$, resp. of $f_{-}$
on $N_{-}$). Set
\begin{align} \label{1.11}S_{a+, i}=S_{+, i}\cap N_{a}, \ \
S_{r-, i}=S_{-, i}\cap N_{r}.
\end{align}
Let $o(N^{-}_{i})$ denote the orientation bundle of $N^{-}_{i}$ over
$B_{i}$ as before. To simplify our notation, we denote by $o(S_{a+, i})
$ (resp. $o(S_{r-, i})$) the corresponding bundle over $S_{a+, i}$
(resp. $o(S_{r-, i})$) and by $n^{-}_{a+,i}$ (resp. $n^{-}_{r-,i}$)
its index in $N_{a}$ (resp. $N_{r}$).
Set
\begin{align}\label{1.12}
F_{a+,j}=&\bigoplus^{t_{+}}_{i=1}H^{j-n^{-}_{a+,i}}\big(S_{a+, i},
o(S_{a+,i})\big), \ \ q_{a+,j}=\textrm{dim}\  F_{a+,j};
\nonumber\\F_{r-,j}=&\bigoplus^{t_{-}}_{i=1}H^{j-n^{-}_{r-,i}}\big(S_{r-,
i}, o(S_{r-,i})\big), \ \ q_{r-,j}=\textrm{dim}\  F_{r-,j}.
\end{align}
Denote by $H^{\bullet}(M, N_{r})$ the relative cohomology of $M$
with respect to $N_{r}$.

\begin{thm} \label{t1.2}  The following inequalities hold for $k=0,
1, \ldots, m$,
\begin{align}\label{1.13}\sum_{j=0}^{k}(-1)^{k-j}\beta_{j}(M, N_{r})\leqslant
\sum_{j=0}^{k}(-1)^{k-j}\mu_{j}, \end{align} where
\begin{align} \label{1.14}
\beta_{j}(M, N_{r})=\textup{dim}\ H^{j}(M, N_{r}),\ \
\mu_{j}=q_{j}+q_{a+,j}+q_{r-, j-1}\,.
\end{align}  The
equality holds for $k=m$. \end{thm}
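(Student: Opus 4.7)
The plan is to reduce Theorem \ref{t1.2} to Theorem \ref{t1.1} applied to a two-step doubling of $M$ with group $G=\mathbb{Z}/2\times\mathbb{Z}/2$. First double $M$ across $N_{a}$ to form $M'=M\cup_{N_{a}} M$ with swap involution $\sigma_{a}$; since $N_{a}$ and $N_{r}$ are disjoint components of $\partial M$, $M'$ is a manifold with boundary $\partial M'=N_{r}\sqcup N_{r}$. Then double $M'$ across $\partial M'$ to obtain the closed manifold $\til{M}'=M'\cup_{\partial M'} M'$ carrying a second commuting involution $\sigma_{r}$. The product collar hypotheses on $g^{TM}$ and on $f$ along both $N_{+}$ and $N_{-}$ ensure that both gluings produce smooth Riemannian structures, and $f$ extends by reflection to a smooth $G$-invariant Morse--Bott function $\til{f}'$ on $\til{M}'$ (the quadratic normal forms $\pm\tfrac{1}{2}u^{2}+f_{\pm}(y)$ of $f$ are reflection-invariant in $u$).

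Next I would identify the four isotypic components of $H^{\bullet}(\til{M}')$ under $G=\langle\sigma_{a},\sigma_{r}\rangle$. A direct local computation shows that $\sigma_{a}$-invariance forces the absolute boundary condition on $N_{a}$ (vanishing normal component), while $\sigma_{r}$-anti-invariance forces the relative boundary condition on $N_{r}$ (vanishing tangential restriction). Consequently the $(+,-)$-character yields
\begin{align*}
H^{j}(\til{M}')^{(+,-)}\cong H^{j}(M,N_{r}),
\end{align*}
while the three other characters $(+,+)$, $(-,+)$, $(-,-)$ give $H^{j}(M)$, $H^{j}(M,N_{a})$ and $H^{j}(M,\partial M)$ respectively.

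It then remains to compute the multiplicity of the $(+,-)$-character in the Morse--Bott side of Theorem \ref{t1.1} applied to $(\til{M}',\til{f}',G)$. The critical submanifolds of $\til{f}'$ are of five types: interior $B_{i}$ (a free $G$-orbit of size $4$, index $n_{i}^{-}$, carrying the regular representation on $H^{\bullet}(B_{i},o(N_{i}^{-}))$ and thus contributing $q_{j}$ to each character); $S_{a+,i}$ (fixed by $\sigma_{a}$, index $n^{-}_{a+,i}$, with $\sigma_{a}$ acting trivially on $o(N^{-})$, so inducing the $G$-rep $(+,+)\oplus(+,-)$); $S_{a-,i}$ (fixed by $\sigma_{a}$, index $n^{-}_{a-,i}+1$, with $\sigma_{a}$ acting by $-1$ on $o(N^{-})$ via the extra negative $u$-direction, inducing $(-,+)\oplus(-,-)$); $S_{r+,i}$ (fixed by $\sigma_{r}$, $\sigma_{r}$ trivial on $o(N^{-})$, inducing $(+,+)\oplus(-,+)$); and $S_{r-,i}$ (fixed by $\sigma_{r}$, index $n^{-}_{r-,i}+1$, with $\sigma_{r}$ acting by $-1$ on $o(N^{-})$, inducing $(+,-)\oplus(-,-)$ and contributing $q_{r-,j-1}$ after the index shift). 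Summing only the $(+,-)$-terms yields exactly $\mu_{j}=q_{j}+q_{a+,j}+q_{r-,j-1}$, so extracting the $(+,-)$-character in Theorem \ref{t1.1} gives (\ref{1.13}) and the equality at $k=m$.

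The main obstacle is the sign bookkeeping at the four types of fixed critical submanifolds: one must correctly track how each of $\sigma_{a}$ and $\sigma_{r}$ acts on the orientation bundle $o(N^{-})$ of the negative normal bundle, carefully accounting for whether the $u$-coordinate of the relevant collar is a positive direction (at $N_{+}$, so the $u$-axis is absent from $N^{-}$ and contributes no sign) or a negative direction (at $N_{-}$, so the $u$-axis sits inside $N^{-}$ and the reflection contributes an extra $-1$). Once this accounting is in place, Theorem \ref{t1.2} is an immediate consequence of extracting the $(+,-)$-character from Theorem \ref{t1.1}.
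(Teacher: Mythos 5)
Your proposal is correct, and it takes a genuinely different (and arguably cleaner) route than the paper. Both you and the paper reduce Theorem~\ref{t1.2} to Theorem~\ref{t1.1} by a two-step doubling with $G=\mathbb{Z}_2\times\mathbb{Z}_2$, but the choice of doubling hypersurfaces differs. The paper doubles first across $N_{+}$ (forming $M_{1}$) and then across $N_{-}'$ (forming $M_{2}$), i.e.\ it follows the $\pm$-quadratic structure of $f$ on the collar rather than the absolute/relative split. This makes the critical locus of the doubled function as simple as possible (only interior $B_{i}$'s and the $S_{+,i}$'s and $S_{-,i}$'s appear), but the isotypic decomposition of $H^{\bullet}(M_{2})$ then produces $H^{j}(M,N_{+})$ rather than $H^{j}(M,N_{r})$. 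The paper therefore needs Lemma~\ref{t4.1} (the Morse inequality for $(M,N_{+})$), two Mayer--Vietoris long exact sequences for the triads $(M,N_{+},N_{r+})$ and $(M,N_{r},N_{r+})$, and two extra applications of Bismut's degenerate Morse inequalities to the closed manifolds $N_{a+}$ and $N_{r-}$ to trade $\beta_{j}(N_{a+})$ and $\beta_{j-1}(N_{r-})$ for $q_{a+,j}$ and $q_{r-,j-1}$. You instead double first across $N_{a}$ and then across $N_{r}\sqcup N_{r}$; this directly makes $H^{j}(M,N_{r})$ the $(+,-)$-isotypic component, so Theorem~\ref{t1.2} falls out of Theorem~\ref{t1.1} in one step. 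The price is that you must classify five types of critical orbits ($B_{i}$, $S_{a\pm,i}$, $S_{r\pm,i}$) and compute the induced $\mathbb{Z}_2\times\mathbb{Z}_2$-representations $\operatorname{Ind}_{H}^{G}(\pm)$ for each stabilizer $H$, carefully tracking whether the collar coordinate $u$ lies in $N^{-}$ (at $N_{-}$, giving an index shift and a sign on $o(N^{-})$) or in $N^{+}$ (at $N_{+}$, giving neither). Your bookkeeping is correct, and the resulting proof avoids the Mayer--Vietoris arithmetic entirely; it also has the side benefit of not invoking Poincar\'e--Lefschetz duality, which the paper uses in \eqref{4.7}.
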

When $f|_{\partial M}=0$ and the critical points of $f$ in the interior of $M$
are isolated and nondegenerate, Theorem \ref{t1.2} reduces to Theorem 1 in
\cite{Zadeh10}.

\section{Equivariant Morse Inequalities}

This section is organized as follows. In Section \ref{s0.1}, we calculate the kernel of
the Witten Laplacian on Euclidean space. The results of this section will
be applied to the fibres of the normal bundle of critical manifolds in $M$.
In Section \ref{s0.2}, a special metric on the total space $N$ is constructed such that the
critical manifolds are totally geodesic in $N$.
In Section \ref{s0.3}, we introduce the Witten deformation, the deformed de Rham operator
$D_{T}$ and state a crucial result (Proposition \ref{t2.2}) concerning the lower part of the spectrum of $D_{T}$.
Section \ref{s0.4} is devoted to the Taylor expansion of $D_{T}$ near the
critical manifolds.
In Section \ref{s0.5}, a decomposition of $D_{T}$ is established.
Various estimates are also briefly described there.
Finally in Section \ref{s0.6}, we prove Proposition \ref{t2.2}
and then finish the proof of Theorem \ref{t1.1}.

\subsection{Some calculations on Euclidian space}\label{s0.1}

In this section, we calculate the kernel of
the Witten Laplacian on Euclidian space.
The result of this section will be applied to the fibres of the
normal bundle to $B$ in $M.$

Let $V$ be an $l$-dimensional real vector space endowed with an
Euclidean scalar product.  Let $V^{+}, V^{-}$ be two subspaces such
that $V=V^{-} \oplus V^{+}$ and $\textrm{dim} V^{-}=n^{-}$.
Let $f\in C^{\infty}(V, \mathbb{R})$ be defined as:
\begin{align}\label{2.3}
f(Z)=f(0)-\frac{{\left\vert Z^{-}\right\vert}^2}{2}+
\frac{{\left\vert Z^{+}\right\vert}^2}{2},
\end{align} where
$Z^{-}=(Z_{1}, \ldots, Z_{n^{-}}), Z^{+}=(Z_{n^{-}+1}, \ldots,
Z_{l}), (Z^{-}, Z^{+})$ denote the coordinate functions on $V$
corresponding to the decomposition $V=V^{-}\oplus V^{+}$.

Let $Z=\sum_{\alpha=1}^{l}Z_{\alpha}e_{\alpha}$
 be the radial vector field on $V$. There is a
natural Euclidean scalar product on $\Lambda V^{\ast}$. Let
$dv_{V}(Z)$ be the volume form on $V$. Let $S$ be the set of
the square integrable sections of $\Lambda V^{\ast}$ over $V$. For
$s_{1}, s_{2} \in S$, set
\begin{align}\label{2.4}
\big\langle s_{1}, s_{2}\big\rangle=
\int_{V}\langle s_{1}, s_{2}\rangle_{\Lambda V^{\ast}}dv_{V}(Z).
\end{align} Let $d$ be the exterior differential operator acting on the
smooth section of $\Lambda V^{\ast}$, and let $\delta$ be the formal
adjoint of $d$ with respect to the Euclidean product (\ref{2.4}).

Let $C(V)$ be the Clifford algebra of $V$, i.e., the algebra
generated over $\mathbb R$ by $e\in V$ and the commutation
relations $e e'+e'e=-2\langle e, e'\rangle$ for $e, e'\in V$.
Let $c(e), \widehat{c}(e)$ be the Clifford
operators acting on $\Lambda V^{\ast}$ defined by
\begin{align} \label{2.5}c(e)=e^{\ast}\wedge-i_{e}, \
\widehat{c}(e)=e^{\ast}\wedge+i_{e}, \end{align} where
$e^{\ast}\wedge$ and $i_{e}$ are the standard notation for exterior
and interior multiplication and $e^{\ast}$ denotes the dual of $e$
with respect to the Euclidean scalar product on $V$. Then $\Lambda V^{\ast}$ is
a Clifford module.
If $X, Y\in V,$ one has
\begin{align}\label{2.6}
c(X)c(Y)+c(Y)c(X)&=-2\langle X, Y\rangle, \nonumber\\
\widehat{c}(X)\widehat{c}(Y)+\widehat{c}(Y)\widehat{c}(X)&=2\langle X, Y\rangle,\\
c(X)\widehat{c}(Y)+\widehat{c}(Y)c(X)&=0.  \nonumber \end{align}
If we denote by $v$ the gradient of $f$ with respect to the given
Euclidean scalar product, then \begin{align}\label{2.7}
v(Z)=-\sum_{\alpha=1}^{n^{-}}Z_{\alpha}e_{\alpha}
+\sum_{\alpha=n^{-}+1}^{l}Z_{\alpha}e_{\alpha}.
\end{align}
Let $\Delta$ be the standard Laplacian on $V$, i.e.,
\begin{align}\label{2.8}
\Delta=-\sum^{l}_{\alpha=1}\Big(\frac{\partial}{\partial
Z_{\alpha}}\Big)^{2}. \end{align}
Set
\begin{align}
d_{T}=e^{-Tf}d\cdot e^{Tf},\ \delta_{T}=e^{Tf}\delta\cdot e^{-Tf}.
\end{align}
The deformed de Rham operator on the Euclidean space is defined by
\begin{align}
D_{T, v}=d_{T}+\delta_{T}=d+\delta+T\hat{c}(v).
\end{align}
Let $e^{1}, \ldots, e^{l}$ be the dual basis of $e_{1}, \ldots, e_{l}$.
Then we have the following result \cite{Witten82}, \cite[Prop.\,4.9]{Zhang01}.

\begin{prop}\label{t2.3}  The kernel of $D_{T, v}^2$ is one-dimensional
and is spanned by
\begin{align}\label{2.9}
\beta=\exp\!\big(-\tfrac{T{|Z|}^{2}}{2}\big)\,e^{1} \wedge\ldots\wedge
e^{n^{-}}.\end{align} Moreover, all nonzero eigenvalues of $D_{T, v}^{2}$
are $\geqslant 2T$. \end{prop}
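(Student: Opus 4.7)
\textbf{Proof proposal for Proposition \ref{t2.3}.} The plan is to square $D_{T,v}$ explicitly, recognize it as a decoupled sum of harmonic oscillators (one per coordinate direction) twisted by the form-degree number operators, and then read off the spectrum.

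First I would expand
\begin{align*}
D_{T,v}^{2}=(d+\delta)^{2}+T\bigl[(d+\delta)\,\widehat{c}(v)+\widehat{c}(v)(d+\delta)\bigr]+T^{2}\widehat{c}(v)^{2}.
\end{align*}
On Euclidean space $(d+\delta)^{2}=\Delta$ acts componentwise as the scalar Laplacian, and $\widehat{c}(v)^{2}=|v|^{2}=|Z|^{2}$ by the second relation in (\ref{2.6}). For the cross term, writing $d+\delta=\sum_{\alpha}c(e_{\alpha})\partial_{\alpha}$ and using the Leibniz rule together with $c(e_{\alpha})\widehat{c}(v)=-\widehat{c}(v)c(e_{\alpha})$ gives
\begin{align*}
(d+\delta)\,\widehat{c}(v)+\widehat{c}(v)(d+\delta)=\sum_{\alpha=1}^{l}c(e_{\alpha})\,\widehat{c}(\partial_{\alpha}v).
\end{align*}
Since $\partial_{\alpha}v=\varepsilon_{\alpha}e_{\alpha}$ with $\varepsilon_{\alpha}=-1$ for $\alpha\leqslant n^{-}$ and $\varepsilon_{\alpha}=+1$ otherwise, and since a direct computation gives $c(e_{\alpha})\widehat{c}(e_{\alpha})=2\,e^{\alpha}\wedge i_{e_{\alpha}}-1$, this term reduces to $2T\sum_{\alpha}\varepsilon_{\alpha}N_{\alpha}-T\sum_{\alpha}\varepsilon_{\alpha}$, where $N_{\alpha}:=e^{\alpha}\wedge i_{e_{\alpha}}$.

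Collecting everything yields the key identity
\begin{align*}
D_{T,v}^{2}=\sum_{\alpha=1}^{l}\Bigl(-\partial_{\alpha}^{2}+T^{2}Z_{\alpha}^{2}+2T\varepsilon_{\alpha}N_{\alpha}-T\varepsilon_{\alpha}\Bigr).
\end{align*}
Each summand is a one-dimensional harmonic oscillator twisted by the form degree in the $\alpha$-direction. I would then separate variables: the scalar Hermite basis diagonalizes $-\partial_{\alpha}^{2}+T^{2}Z_{\alpha}^{2}$ with eigenvalues $(2k_{\alpha}+1)T$ for $k_{\alpha}\in\mathbb{N}$, and the monomial basis $e^{I}=e^{\alpha_{1}}\wedge\cdots\wedge e^{\alpha_{p}}$ ($I\subset\{1,\ldots,l\}$) diagonalizes $N_{\alpha}$ with eigenvalue $\mathbf{1}_{\alpha\in I}$. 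The resulting eigenvalue of $D_{T,v}^{2}$ on the corresponding tensor product is
\begin{align*}
2T\Bigl(\sum_{\alpha}k_{\alpha}+n^{-}+\sum_{\alpha\in I}\varepsilon_{\alpha}\Bigr).
\end{align*}

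Finally I would minimize this expression. Since $\sum_{\alpha\in I}\varepsilon_{\alpha}\geqslant -n^{-}$ with equality if and only if $I=\{1,\ldots,n^{-}\}$, and since $k_{\alpha}\geqslant 0$, the eigenvalue is nonnegative, and vanishes exactly when $k_{\alpha}=0$ for all $\alpha$ and $I=\{1,\ldots,n^{-}\}$. This singles out the one-dimensional kernel spanned by $\beta$ in (\ref{2.9}), and inspection shows that the next eigenvalue is at least $2T$. The main bookkeeping obstacle is simply keeping the signs straight in the Clifford computation of the cross term; once the identity $c(e_{\alpha})\widehat{c}(e_{\alpha})=2e^{\alpha}\wedge i_{e_{\alpha}}-1$ is correctly derived from (\ref{2.5}) and (\ref{2.6}), the rest is the standard spectral analysis of the harmonic oscillator.
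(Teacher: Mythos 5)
Your proof is correct and follows essentially the same route as the paper: expand $D_{T,v}^2$ via the Clifford relations, recognize the harmonic oscillator plus a commuting algebraic number operator, and read off the spectrum. The only cosmetic difference is that you fully separate variables into $l$ one-dimensional oscillators and compute the eigenvalues explicitly via Hermite functions, whereas the paper keeps the $l$-dimensional oscillator $\mathcal{L}_T=\Delta+T^2|Z|^2-Tl$ as a single unit and cites its known spectrum.
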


\begin{proof} We recall the proof for the reader's convenience.
For $e\in V$, let $\nabla_{e}$ be the differential operator
along the vector $e$. It is easy to calculate the square of $D_{T, v}$,
\begin{align}\begin{split}\label{2.10}
D_{T, v}^{2}&=\Delta+T^2{|Z|}^{2}+
T\sum_{\alpha=1}^{l}c(e_{\alpha})\widehat{c}(\nabla_{e_{\alpha}}v)
\\&=(\Delta+T^2{|Z|}^{2}-Tl)+T\sum_{\alpha=1}^{n^{-}}\big[1-c(e_{\alpha})
\widehat{c}(e_{\alpha})\big]
+T\sum_{\alpha=n^{-}+1}^{l}\big[1+c(e_{\alpha})\widehat{c}(e_{\alpha})\big]
\\&=(\Delta+T^2{|Z|}^{2}-Tl)+2T\Big(\sum_{\alpha=1}^{n^{-}}i_{e_{\alpha}}
{e^{\alpha}}\wedge +\sum_{\alpha=n^{-}+1}^{l}{e^{\alpha}}\wedge
i_{e_{\alpha}}\Big).\end{split}\end{align}
The operator
\begin{align}\label{2.11}\mathcal{L}_{T}=\Delta+T^2{|Z|}^{2}-Tl
\end{align} is the harmonic oscillator operator on $V$.
By \cite[Th.\,1.5.1]{Glimm87}, \cite[Appendix E]{Ma07}, we know
that $\mathcal{L}_{T}$ is a positive elliptic operator with one-dimensional
kernel generated by $\exp(-\tfrac{T{|Z|}^{2}}{2})$. 
Moreover, the nonzero eigenvalues of $\mathcal{L}_{T}$ are all
greater than $2T$.  It is also easy to verify that the linear
operator
\begin{align}\label{2.12}
\sum_{\alpha=1}^{n^{-}}i_{e_{\alpha}}{e^{\alpha}}\wedge
+\sum_{\alpha=n^{-}+1}^{l}{e^{\alpha}}\wedge
i_{e_{\alpha}}\end{align} is positive and has one-dimensional kernel generated by
$e^{1}\wedge\ldots\wedge e^{n^{-}}$.
The proof of Proposition \ref{t2.3} is complete. \end{proof}

\subsection{Local analysis near critical manifolds}\label{s0.2}

Let $B$ be an equivariant critical submanifold of the Morse-Bott function $f$.
By equivariant Morse's Lemma \cite[Lemma 4.1]{Wasserman69}, we know that $B$
possesses a $G$-invariant tubular neighborhood $(h, N)$ such that:
\\(1) $N$ is a $G$-vector bundle over $B$, which is endowed with $G$-invariant
scalar product $g^{N}$. Moreover $N$,  which has rank $m-n$, splits
into two orthogonal $G$-subbundles $N=N^{-}\oplus N^{+}$, where the rank
of $N^{-}$ is $n^{-}$.
\\(2) $h$ equivariantly embeds $N$ into $M$. Moreover there is
an open $G$-invariant neighborhood
$\mathcal {B}$ of $B$ in $N$ such that if $Z=(Z^{-}, Z^{+})\in
\mathcal {B}$, then
\begin{align}\label{2.14}
f(h(Z))=c-\frac{{|Z^{-}|}^2}{2}+\frac{{|Z^{+}|}^2}{2},
\end{align} where $c$ denotes the value of the constant function $f|_B$.

In the sequel, we will identify $N$ and $h(N)$. Let $\pi$ be the
projection $N \rightarrow B.$ We denote by $o(N^{-})$
the orientation bundle of $N^{-}$.

Let $g^{TB}$ be a $G$-invariant Riemannian metric on $TB$ and $\nabla^{TB}$ be the
Levi-Civita connection on $TB$ which is then $G$-invariant. As Euclidean
$G$-bundles, $(N^{-}, g^{N^{-}})$ (resp. $(N^{+},
g^{N^{+}}))$ can be endowed with $G$-invariant Euclidean connections
$\nabla^{N^{-}}$ (resp. $\nabla^{N+}$), where $g^{N^{-}}$ (resp. $g^{N^{+}}$) denotes
the restriction of the scalar product $g^{N}$ to the subbundle $N^{-}$ (resp. $N^{+}$).
We then have a natural Euclidean connection $\nabla^{N}$ on $N$, i.e.,
\begin{align}\label{2.15}
\nabla^{N}=\nabla^{N^{-}}\oplus
\nabla^{N^{+}}.
\end{align} The Euclidean connection $\nabla^{N}$ on
$N$ induce a splitting $TN=T^{H}N \oplus T^{V}N$ of the tangent
space of the total space $N$ \cite[Prop.\,1.20]{Berline04}, where $T^{H}N$ is the
horizontal part of $TN$ with respect to the Euclidean connection
$\nabla^{N}$. If $X\in TB$, let $X^{H}$ denote the horizontal lift
of $X$ in $T^{H}N$ such that $X^{H}\in T^{H}N, {\pi}_{\ast}X^{H}=X.$

If $y \ \in N$, then $\pi_{\ast}$ identifies $T^{H}_{y}N$ with
$T_{\pi(y)}B$. Moreover,  $T^{V}_{y}N$ and $N$ can be naturally
identified. In this way, $T^{H}_{y}N$ and $T^{V}_{y}N$ are both
endowed with a scalar product. We can assume as well that they are
orthogonal for the metric $g^{TN}$ which splits into
$g^{TN}=\pi^{\ast}(g^{TB})\oplus g^{N}$.
Let $\nabla^{TN}$ be the Levi-Civita connection on
$N$ associated to the Riemannian metric $g^{TN}$.

Let $TN|_{B}$ be the restriction of the tangent bundle $TN$ to $B$.
Recall that $N$ is identified with the bundle orthogonal to $TB$ in
$TN|_{B}$, i.e., $TN|_{B}=TB\oplus N$. Let $\nabla^{TN|_{B}}$ be the
restriction of $\nabla^{TN}$ to $TN|_{B}$.

\begin{lemma}\label{t2.6} The following identity holds:
 \begin{align}
    \label{2.16} \nabla^{TN|_{B}}=\nabla^{TB}\oplus \nabla^{N}.
 \end{align}
\end{lemma}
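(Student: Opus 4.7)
The plan is to reduce the claim to two pointwise identities along $B\subset N$ and verify each one via the Koszul formula for the Levi-Civita connection of $(N,g^{TN})$, exploiting the product geometry of the metric at the zero section. Under the orthogonal splitting $TN|_B=T^HN|_B\oplus T^VN|_B\simeq TB\oplus N$, identity \eqref{2.16} is equivalent to
\[
\nabla^{TN}_{X^H}Y^H\big|_B=(\nabla^{TB}_XY)^H,\qquad \nabla^{TN}_{X^H}\widetilde s\big|_B=(\nabla^N_Xs)^V,
\]
for $X,Y\in\Gamma(B,TB)$ and $s\in\Gamma(B,N)$, where $X^H$ is the horizontal lift of $X$ with respect to $\nabla^N$ and $\widetilde s$ denotes $s$ viewed as the associated vertical vector field on the total space $N$. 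It therefore suffices to compute $\nabla^{TN}_{X^H}Y^H|_B$ and $\nabla^{TN}_{X^H}\widetilde s|_B$ by pairing with horizontal and vertical test fields in the Koszul formula.

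The inputs, all evaluated along $B$, are the following. On the metric side, $g^{TN}(X^H,Y^H)$ equals the pullback $\pi^*g^{TB}(X,Y)$ and is hence killed by any vertical derivative, $g^{TN}(\widetilde s,\widetilde t)$ equals the pullback $\pi^*g^N(s,t)$, while $g^{TN}(X^H,\widetilde s)\equiv 0$. On the bracket side, at points of $B$,
\[
[X^H,Y^H]\big|_B=[X,Y]^H,\qquad [X^H,\widetilde s]\big|_B=(\nabla^N_Xs)^V,\qquad [\widetilde s,\widetilde t]\equiv 0.
\]
The first holds because the vertical component of the bracket of two horizontal lifts is, at a point $(b,y)\in N$, a curvature term linear in $y$, which vanishes on the zero section $y=0$; the second is the defining property of the horizontal distribution attached to the linear connection $\nabla^N$; the third is immediate because $\widetilde s$ and $\widetilde t$ act as fibrewise translations.

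With these inputs I would cycle through the Koszul formula in the four cases $(U,V,W)=(X^H,Y^H,Z^H)$, $(X^H,Y^H,\widetilde t)$, $(X^H,\widetilde s,Z^H)$, $(X^H,\widetilde s,\widetilde t)$. The all-horizontal case reduces cleanly to the Koszul formula for $\nabla^{TB}$ on $B$ and returns $(\nabla^{TB}_XY)^H$. The two mixed cases vanish identically along $B$, since every term there is either a horizontal-vertical pairing or a vertical derivative of a fibrewise-constant function; this forces $\nabla^{TN}_{X^H}Y^H|_B$ to be purely horizontal and $\nabla^{TN}_{X^H}\widetilde s|_B$ purely vertical. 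The all-vertical case, combined with the metric compatibility of $\nabla^N$, simplifies to $2g^N(\nabla^N_Xs,t)$ and delivers $(\nabla^N_Xs)^V$. The only step with real geometric content is the first bracket identity, where the linearity of $\nabla^N$ as a connection on the vector bundle $N\to B$ is essential; I expect this to be the main (but very mild) obstacle, the remainder being routine Koszul bookkeeping.
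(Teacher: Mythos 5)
Your proposal is correct and complete. The paper declares the proof ``straightforward and left to the reader,'' so there is no argument to compare against; your Koszul-formula computation, driven by the product form $g^{TN}=\pi^{*}g^{TB}\oplus g^{N}$ and the three bracket identities (the curvature/vertical term of $[X^{H},Y^{H}]$ being linear in the fibre variable and hence vanishing at the zero section, $[X^{H},\widetilde s]=(\nabla^{N}_{X}s)^{V}$, and $[\widetilde s,\widetilde t]=0$), is exactly the natural way to fill that gap and all four cases close as you describe, the only nontrivial step beyond bookkeeping being metric compatibility of $\nabla^{N}$ in the final all-vertical case.
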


\begin{proof}
The proof is straightforward and is left to the reader.
\end{proof}

\subsection {The deformed de Rham operator}\label{s0.3}

Let $g^{TM}$ be a $G$-invariant Riemannian metric on $M$ which coincides with $g^{TN}$ in a
neighborhood of $B$ via the embedding $h$ (this
is always possible by a partition of unity argument).

Let $o(TM)$ be the orientation line bundle on $M$ and let $dv_{M}$
be the density (or Riemannian volume form) on $M$. Note that we do
not assume that $M$ is oriented; thus $dv_{M}\in C^{\infty}(M,
\Lambda^{m}(T^{\ast}M)\otimes o(TM))$ (see \cite[p. 29]{Berline04},
\cite[p. 88]{Bott822}).  Let $\textup{E}$
be the set of smooth sections of $\Lambda (T^{\ast}M)$ on $M$.
For $s_{1}, s_{2}\in \textup{E}$,
set \begin{align} \label{2.24}
\big\langle s_{1}, s_{2}\big\rangle=\int_{M}\langle s_{1},
s_{2}\rangle(x)dv_{M}(x).
\end{align}

Let $D^{M}$ be the classical Dirac operator on $M$,
i.e., $D^{M}=d+\delta$, where $d$ is the exterior differential operator
and $\delta$ is the adjoint of $d$ with respect to the metric
(\ref{2.24}).

Set \begin{align}
d_{T}=e^{-Tf}d\cdot e^{Tf}, \ \delta_{T}=e^{Tf}\delta\cdot e^{-Tf}.
\end{align}
The deformed de Rham operator $D_{T}$ is defined by
\begin{align}
D_{T}=d_{T}+\delta_{T}=D^{M}+T\hat{c}(\nabla f),
\end{align}
where $\nabla f$ is the gradient vector field of $f$ with respect to
the Riemannian metric $g^{TM}$ of $M$. We denote by $\Omega^{j}(M)$ the
smooth sections of $j$-forms of $M$. The next result describes the lower part of
the spectrum of $D^{2}_{T}$ for large $T$. It will be proved in Section \ref{s0.6}.

\begin{prop} \label{t2.2}
There exist $C_{0}>0, T_{0}>0$ such
that for $T>T_{0},$ the number of eigenvalues of
$D_{T}^2|_{\Omega^{j}(M)}$ in $[0, C_{0})$ equals $q_{j}$. Moreover, the direct sum of eigenspaces of
$D_{T}^{2}$ with eigenvalues in $[0, C_{0})$ is a $G$-vector space.
\end{prop}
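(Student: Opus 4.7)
The plan is to follow the Bismut--Lebeau analytic localization strategy as adapted in \cite{Zhang01}, using Proposition \ref{t2.3} as the fibrewise model. The overall picture is: outside a small neighborhood of $\bigcup_i B_i$ one has $|\nabla f|\geq\delta>0$, and the standard Bochner-type identity
\begin{align*}
D_T^2 = (d+\delta)^2 + T^2|\nabla f|^2 + T\sum_\alpha c(e_\alpha)\widehat{c}(\nabla_{e_\alpha}\nabla f)
\end{align*}
gives $D_T^2 \geq T^2\delta - CT$ there, so any small eigenform must concentrate near the critical submanifolds. Near each $B_i$, the tubular model $(h, N_i^-\oplus N_i^+)$ from Section \ref{s0.2}, the normal form (\ref{2.14}) and the Taylor expansion announced in Section \ref{s0.4} reduce $D_T$ modulo lower-order errors to a horizontal twisted de Rham operator on $B_i$ plus a vertical operator of the type $D_{T,v}$ treated in Proposition \ref{t2.3}.

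For the upper bound on the number of small eigenvalues I would build approximately harmonic trial forms from the fibrewise ground state. Fix a $G$-invariant cut-off $\rho_i$ supported in $\mathcal{B}_i$ and equal to $1$ near $B_i$. For $\alpha \in \Omega^{j-n_i^-}(B_i, o(N_i^-))$ set
\begin{align*}
(J_T^{(i)}\alpha)(y,Z) \;=\; \rho_i(Z)\, T^{(m-n_i)/4}\, e^{-T|Z|^2/2}\, e^1\wedge\cdots\wedge e^{n_i^-}\wedge \pi^*\alpha(y),
\end{align*}
normalized so that $J_T^{(i)}$ is an $L^2$-isometry up to $O(T^{-1})$. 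Using $D_{T,v_i}\beta_i = 0$ and the fact that commutators with $\rho_i$ are exponentially small in $T$ by the Gaussian factor, one obtains $\|D_T J_T^{(i)}\alpha\|^2 \leq C\|\alpha\|^2$ with $C$ independent of $T$. Summing over $i$ yields a subspace of $\Omega^j(M)$ of dimension $q_j$ on which the Rayleigh quotient is bounded, so by min-max, $D_T^2|_{\Omega^j(M)}$ has at least $q_j$ eigenvalues below some fixed $C_0$.

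The matching lower bound --- showing at most $q_j$ eigenvalues lie below $C_0$ --- is the main obstacle. The approach is to use the decomposition from Section \ref{s0.5}: write any $s\in\Omega^j(M)$ as $s = J_T\alpha + s^\perp$ orthogonally, and prove $\langle D_T^2 s^\perp, s^\perp\rangle \geq C_0\|s^\perp\|^2$. Outside a neighborhood of the critical set the global estimate above handles this; inside a tubular neighborhood of each $B_i$ one decomposes $s^\perp$ fibrewise via the spectral decomposition of $D_{T,v_i}^2$ and exploits the $2T$ spectral gap from Proposition \ref{t2.3}, together with the fibrewise orthogonality to $\beta_i$. The horizontal twisted de Rham operator on $B_i$ contributes only $O(1)$ terms, which are dominated by $2T$; the Taylor remainders, horizontal curvature contributions and metric mismatch errors across the glueing region must be absorbed via an IMS-type partition of unity, relying on the Gaussian decay in the vertical directions for the cross-terms.

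Equivariance is essentially automatic at the end: $g^{TM}$, $f$, the tubular embedding $h$ and the splitting $N_i = N_i^-\oplus N_i^+$ are all $G$-invariant, so $D_T$ commutes with the $G$-action, hence so does the spectral projector for $[0, C_0)$, and its image is a $G$-representation. The serious technical work sits in the lower bound just described --- the careful accounting of horizontal curvature contributions, Taylor remainders and cut-off commutators against the $2T$ gap supplied by the fibrewise model in Proposition \ref{t2.3}.
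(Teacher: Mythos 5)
Your overall strategy is the Bismut--Lebeau localization route the paper follows: build Gaussian trial forms $J_T$ from the fibrewise ground state, decompose $D_T$ against $\textrm{E}^{0}=\textrm{E}^{0}_{T}\oplus\textrm{E}^{0,\perp}_{T}$, and play the $2T$ spectral gap of the vertical model operator against the $O(1)$ horizontal terms and remainders. The upper bound (at least $q_j$ small eigenvalues) is fine once you restrict $\alpha$ to the $D^B$-harmonic forms so that the Rayleigh quotient of $J_T\alpha$ actually tends to $0$, not merely stays bounded; as written you only get eigenvalues below some fixed $C$, which is not quite enough to place them in a $C_0$-window that is later fixed by a gap condition on $\operatorname{Spec}(D^B)$.

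The genuine gap is in your lower bound. You propose to show $\langle D_T^2 s^\perp,s^\perp\rangle\geq C_0\|s^\perp\|^2$ for $s^\perp\in\textrm{E}^{0,\perp}_T$ and conclude there are at most $q_j$ small eigenvalues. That implication fails: $\textrm{E}^0_T\cong\textrm{F}^0$ is infinite-dimensional, so a coercivity estimate on the orthogonal complement alone cannot bound the number of small eigenvalues of $D_T^2$ --- one could in principle have infinitely many eigenforms concentrated in $\textrm{E}^0_T$. What is missing is the identification of the diagonal block: $J_T^{-1}D_{T,1}J_T=D^B+O(1/\sqrt T)$ (Proposition \ref{t2.11}), together with the off-diagonal estimates $\|D_{T,2}\|,\|D_{T,3}\|=O(1/\sqrt T)$ and the coercivity of $D_{T,4}$ (Lemma \ref{t2.13}). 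These four block estimates feed a resolvent/spectral-projector comparison (Proposition \ref{t2.29}, $d(P^{C_0}_T,Q)\leq C/\sqrt T$) which is what actually pins the total count to $\dim\ker D^B$, and then one still needs the per-degree bookkeeping (\ref{2.77})--(\ref{2.79}) to split the total count into the $q_j$. Your sketch compresses all of this into "the horizontal operator contributes $O(1)$ terms dominated by $2T$," but the $O(1)$ horizontal operator is precisely the object whose small spectrum must be counted, so it cannot simply be absorbed. The equivariance discussion at the end is correct and matches the paper.
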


\subsection {Local expansion of the operator
$D_{T}$ near the critical submanifold $B$}\label{s0.4}

We first introduce a coordinate system on $M$ near $B$. If $y \in B,
Z\in N_{y}$, let $y_{t}=\textrm{exp}_{y}(tZ), t\in {\mathbb R}$ be
the geodesic in $M$ with $y_{0}=y, \dot{y}_{0}=Z$, where $\dot{y}_{0}$
denotes $dy_{t}/{dt}$ evaluated at $t=0$. For
$\varepsilon >0,$ set ${\mathcal B}_{\varepsilon}={\{(y, Z)\in
N; y\in B, |Z| < \varepsilon}\}.$ In the following we denote $|Z|_{g^{N}_{y}}$ simply by $|Z|$. Since $B$ and $M$ are compact,
there exists $\varepsilon_{0} >0$ such that for $0<\varepsilon <
\varepsilon_{0}$, the map $(y,Z)\in N\rightarrow \textup{exp}_{y}Z\in M$
is a diffeomorphism from $ {\mathcal
B}_{\varepsilon}$ onto a tubular neighborhood ${\mathcal
U}_{\varepsilon}$ of $B$ in $M$. From now on, we identify ${\mathcal
B}_{\varepsilon}$ with ${\mathcal U}_{\varepsilon}$ and use the
notation $x=(y, Z)$ instead of $x=\textrm{exp}_{y}Z$. Finally, we
identify $y \in B$ with $(y, 0)\in N$.

The symbols $dv_{B}$ and $dv_{N}$ are
understood in the same manner as $dv_{M}$. Let
$\big\{f_{1}, \ldots, f_{n}$, $e_{1}, \ldots, e_{l}\big\}$
be a local orthonormal frame of $TN|_{B}$ with $\{f_{1}, \ldots, f_{n}\}$ being an
orthonormal frame for $TB$ and $\{e_{1}, \ldots, e_{l}\}$ an orthonormal frame for $N$.
By the definition of $g^{TN}$, we know that $e_{1}, \ldots, e_{l}$ are
also orthonormal basis at the points $(y, Z)$ on the total space
$N$. It is clear that
\begin{align}\label{2.23} dv_{N}(y, Z)=
dv_{B}(y)dv_{N_{y}}(Z). \end{align}

Take $\alpha>0$. Let ${\bf E}$ (resp. ${\bf
E}_{\alpha}$) be the set of smooth sections of $\pi^{\ast}(\Lambda T^{\ast}M|_{B})$ on the total
space of $N$ (resp. of $\pi^{\ast}(\Lambda (T^{\ast}M)|_{B})$ over
$\mathcal{B}_{\alpha}$).

For $s_{1}, s_{2}\in  {\bf E}$ have compact support,
set
\begin{align}\label{2.25}
\big\langle s_{1}, s_{2}\big\rangle=\int_{B}\Big(\int_{N_{y}}\langle s_{1}, s_{2}\rangle(y,
Z)dv_{N_{y}}(Z)\Big)dv_{B}(y). \end{align}

If $s\in {\bf E}$ has compact support in ${\mathcal
B}_{\varepsilon_{0}}$, we will identify $s$ with an element of
$\textrm{E}$ which has compact support in ${\mathcal
U}_{\varepsilon_{0}}$. This identification is unitary with respect
to the Euclidean product (\ref{2.24}) and (\ref{2.25}).

The Levi-Civita connection $\nabla^{TM}$ on $TM$ induces a connection
on $\Lambda (T^{\ast}M)$, which we denote by
$\nabla^{\Lambda (T^{\ast}M)}$.  Let $\nabla^{\Lambda (T^{\ast}M)|_{B}}$
be the restriction of $\nabla^{\Lambda (T^{\ast}M)}$ to $\Lambda
(T^{\ast}M)|_{B}$. The connection $\nabla^{\Lambda (T^{\ast}M)|_{B}}$ on
$\Lambda (T^{\ast}M)|_{B}$ can be lift to a connection on the bundle
$\pi^{\ast}(\Lambda (T^{\ast}M)|_{B})$, which we denote by
$\pi^{\ast}(\nabla^{\Lambda (T^{\ast}M)|_{B}}).$

\begin{defn}\label{t2.7} Let $D^{H}, D^{N}$ be the following operators acting on ${\bf E}$:
  \begin{align}\begin{split}\label{2.26}
  D^{H}&=\sum_{j=1}^{n}c(f_{j})\pi^{\ast}(\nabla^{\Lambda (T^{\ast}M)|_{B}})_{f_{j}^{H}},
 \\ D^{N}&=\sum_{\alpha=1}^{l}c(e_{\alpha})
 \pi^{\ast}(\nabla^{\Lambda (T^{\ast}M)|_{B}})_{e_{\alpha}}.
 \end{split}\end{align}
\end{defn}

One verifies directly that $D^{H}, D^{N}$ is self-adjoint with respect to
metric (\ref{2.25}). Indeed, $D^{N}$ is formally self-adjoint along the
fibres of $N$, i.e., for $s_{1}, s_{2} \in \textrm{\bf E}$ with
compact supports, $y\in B$,
\begin{align}\label{2.27}\int_{N_{y}}\langle D^{N}s_{1},
s_{2}\rangle(y, Z)dv_{N_{y}}(Z)=\int_{N_{y}}\langle s_{1},
D^{N}s_{2}\rangle(y, Z)dv_{N_{y}}(Z). \end{align}

Using the identification $(\Lambda (T^{\ast}M))_{(y, Z)}$ with $(\Lambda
(T^{\ast}M))_{y}$ by parallel transport along the geodesic
$t\rightarrow (y, tZ), t \in [0, 1]$ with respect to the connection
$\nabla^{\Lambda (T^{\ast}M)}$, we can now consider the connection
$\nabla^{\Lambda (T^{\ast}M)}$ as a Euclidean connection on
$\pi^{\ast}(\Lambda (T^{\ast}M)|_{B})$ over ${\mathcal
B}_{\varepsilon}$.

Recall that the vector field
$v$ is defined as in (\ref{2.7}). Set
\begin{align}\label{2.30}
D_{T}^{N}=D^{N}+T\widehat{c}(v).
\end{align} Then we have the
following  analogue of \cite[Lemma 2.4]{Feng01} ,
\cite[Th.\,8.18]{Bismut74}.

\begin{thm}\label{t2.8}
The following
asymptotic formula holds on ${\bf E}_{\varepsilon_{0}}$ as $T\rightarrow +\infty$,
\begin{align}\label{2.31}
D_{T} =D_{T}^{N}+D^{H}+O\big({|Z|}^2 {\partial}^{N}+|Z|^2
\partial^{H}+|Z|+T{|Z|}^4\big), \end{align}
where $\partial^{H}$ and $\partial^{N}$ represent horizontal and
vertical differential operators, respectively.
 \end{thm}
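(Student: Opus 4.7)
To establish Theorem~\ref{t2.8}, the plan is to expand $D_T = D^M + T\hat{c}(\nabla f)$ in Fermi coordinates $(y,Z)$ on $\mathcal{B}_{\varepsilon_0}$ and compare it with $D_T^N + D^H$, in the spirit of \cite[Thm.\,8.18]{Bismut74} and \cite[Lem.\,2.4]{Feng01}.

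First, I would fix local orthonormal frames $\{f_j\}_{j=1}^n$ of $TB$ and $\{e_\alpha\}_{\alpha=1}^l$ of $N$, $\nabla^{TB}$- and $\nabla^N$-parallel at a reference point, and extend them to $\mathcal{B}_{\varepsilon_0}$ by horizontal lift $f_j \mapsto f_j^H$ via $\nabla^N$ and by constant fibre translation for $e_\alpha$. Identifying $\Lambda T^*M$ over $\mathcal{B}_{\varepsilon_0}$ with $\pi^*(\Lambda T^*M|_B)$ via parallel transport of $\nabla^{\Lambda T^*M}$ along the radial fibre geodesics $t\mapsto\exp_y(tZ)$, the Dirac formula in this frame reads
\[
D^M \;=\; \sum_{j=1}^n c(f_j^H)\,\nabla^{\Lambda T^*M}_{f_j^H} \;+\; \sum_{\alpha=1}^l c(e_\alpha)\,\nabla^{\Lambda T^*M}_{e_\alpha}.
\]

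Second, I would compare each covariant derivative with its counterpart built from $\pi^*(\nabla^{\Lambda T^*M|_B})$, the connection used in Definition~\ref{t2.7}. Setting $\Theta = \nabla^{\Lambda T^*M} - \pi^*(\nabla^{\Lambda T^*M|_B})$, a 1-form on $\mathcal{B}_{\varepsilon_0}$ valued in $\End(\pi^*\Lambda T^*M|_B)$, the radial parallel-transport trivialisation forces $i_R\Theta = 0$, while Lemma~\ref{t2.6} gives $\Theta|_B = 0$. A Taylor expansion of the Christoffel symbols in Fermi normal coordinates along $B$ (which is totally geodesic by Lemma~\ref{t2.6}) then yields $\Theta(f_j^H), \Theta(e_\alpha) = O(|Z|^2)$ as coefficients of first-order operators, while a zeroth-order $O(|Z|)$ remainder arises from the Clifford derivatives of the frame vectors along horizontal directions. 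Summing against Clifford factors gives
\[
D^M \;=\; D^H + D^N + O\bigl(|Z|^2\partial^H + |Z|^2\partial^N + |Z|\bigr).
\]

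Third, I would expand the deformation term $T\hat{c}(\nabla f)$. Because $g^{TM} = \pi^*g^{TB}\oplus g^N$ on $\mathcal{B}_{\varepsilon_0}$ and each vertical fibre ray through a point of $B$ is a geodesic (by Lemma~\ref{t2.6} together with the product-metric structure), the Fermi coordinates coincide with the $h$-coordinates from the equivariant Morse Lemma along vertical directions, and $f = c + Q(Z)$ with $Q(Z) = -|Z^-|^2/2 + |Z^+|^2/2$ exactly there. The residual correction in $\nabla f - v$ coming from horizontal variations, together with the Clifford trivialisation error arising from the radial parallel transport acting on $v$, yields $T\hat{c}(\nabla f) = T\hat{c}(v) + O(T|Z|^4)$. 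Recognising $D^N + T\hat{c}(v) = D_T^N$ then combines everything into the claimed expansion.

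The main obstacle I anticipate is the second step: one must verify, using Lemma~\ref{t2.6} and the radial gauge choice, that the linear-in-$|Z|$ contributions of the Christoffel symbols cancel, so that the remainder carries a $|Z|^2$ factor in front of the first-order operators rather than a $|Z|$ factor. Once those cancellations are in place, the remaining bookkeeping follows the Riemannian-geometric pattern of \cite{Bismut74, Feng01}.
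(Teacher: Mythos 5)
Your overall strategy is the same as the paper's: expand $D^M$ and $T\hat{c}(\nabla f)$ in Fermi coordinates, use Lemma~\ref{t2.6} to exploit that $B$ is totally geodesic in $N$, and introduce the connection-difference $\Gamma=\nabla^{\Lambda T^\ast M}-\pi^\ast(\nabla^{\Lambda T^\ast M|_B})$ (your $\Theta$). The main structure is right, and you correctly identify the crucial technical point: the geodesic-parallel frame $\tilde f_j$ agrees with the horizontal lift $f_j^H$ up to $O(|Z|^2)$ rather than $O(|Z|)$, which is exactly (\ref{2.34})--(\ref{2.35}) and is what Lemma~\ref{t2.6} buys.

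However, your bookkeeping in the second step contains an error. You assert $\Theta(f_j^H),\Theta(e_\alpha)=O(|Z|^2)$ and that the $O(|Z|)$ zeroth-order remainder comes from ``Clifford derivatives of the frame vectors.'' This is backwards. Note that $\Theta(X)$ is already an endomorphism, i.e.\ a zeroth-order operator, not a coefficient of a first-order one. In the radial gauge the standard first-order Taylor expansion gives $\Theta(X)_{(y,Z)}\approx \tfrac12\bigl(R^{\nabla^{\Lambda T^\ast M}}-\pi^\ast R^{\nabla^{\Lambda T^\ast M|_B}}\bigr)(Z,X)_y + O(|Z|^2)$, so even with $\Theta|_B=0$ one only gets $\Theta(X)=O(|Z|)$; the two curvatures do not coincide in general, and there is no mechanism for an extra factor of $|Z|$. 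In fact this $O(|Z|)$ contribution of $\Gamma$ is precisely the zeroth-order $O(|Z|)$ term appearing in (\ref{2.31}). The first-order remainders $O(|Z|^2\partial^H)$ and $O(|Z|^2\partial^N)$ come from the other side of the ledger: the discrepancy $\tilde f_j-f_j^H=O(|Z|^2)$ (equivalently, under the parallel-transport trivialisation $c(f_j^H)=c(f_j)+O(|Z|^2)$) multiplying the covariant derivative. So the expansion you are after is obtained without any cancellation in $\Theta$ itself; the cancellation you correctly anticipated is entirely in the frame comparison (\ref{2.35}), and it is an immediate consequence of Lemma~\ref{t2.6} once one writes out the linear terms of $\tilde f_j$. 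With this attribution fixed, your third step and the final assembly match the paper's argument.
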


\begin{proof} We adapt the proof from \cite[Th.\,8.18]{Bismut74},
\cite[Lemma 2.3]{Feng01} and show how the proof simplifies in our case,
due to the fact that $B$
is now a totally geodesic submanifold of the total manifold $N$.
For $({y}, Z)\in \mathcal{B}_{\varepsilon_{0}}, X\in T_{{y}}N$, let
$\tilde{X}$ be the parallel transport of $X$ with respect to the
connection $\nabla^{TM}$ along the geodesic $t\rightarrow ({y}, tZ), \
t\in [0, 1]$, i.e.,
\begin{align}\label{2.32}
(\nabla_{Z}^{TM}\tilde{X})({y}, Z)=0.
\end{align}
\noindent  Then $\tilde{e}_{\alpha}({y}, Z)=e_{\alpha}({y})$ and
\begin{align}\label{2.33}
D^{M}=\sum_{j=1}^{n}c(f_{j})\nabla^{\Lambda (T^{\ast}M)}_{\tilde{f}_{j}}
+\sum^{l}_{\alpha=1}c(e_{\alpha})\nabla^{\Lambda (T^{\ast}M)}_{e_{\alpha}}.
\end{align}
For $1\leqslant j\leqslant n$,  set
\begin{align}\label{2.34}\tilde{f}_{j}({y},
Z)=f_{j}({y})+\sum_{k=1}^{n}\sum_{\alpha=1}^{l}c_{kj}^{\alpha}({y})Z_{\alpha}f_{k}
+\sum_{\beta=1}^{l}\sum_{\alpha=1}^{l}c_{\beta
j}^{\alpha}({y})Z_{\alpha}e_{\beta}+O(|Z|^2),\end{align}
where $c_{kj}^{\alpha}({y}),\ c_{\beta j}^{\alpha}({y})$ are smooth functions of $y$.
Using (\ref{2.32}) and Lemma \ref{t2.6}, we find that
\begin{align}\label{2.35}
\tilde{f}_{j}({y},Z)=f^{H}_{j}({y},Z)+O(|Z|^{2}).
\end{align}
Set \begin{align} \label{2.36}
\Gamma=\nabla^{\Lambda (T^{\ast}M)}-\pi^{\ast}(\nabla^{\Lambda (T^{\ast}M)|_{B}}).
\end{align}
By Lemma \ref{t2.6}, $\Gamma_{{y}}=0.$
Combining (\ref{2.33}), (\ref{2.35}) and (\ref{2.36}),  we get
\begin{align}\label{2.37}
D^{M}=D^{H}+D^{N}+O\big(|Z|^{2}\partial^{H}+|Z|^{2}\partial^{N}+|Z|\big).
\end{align}
Set \begin{align}\label{2.38}
\nabla f({y}, Z)=\sum_{j=1}^{n}v_{j}({y},
Z)\tilde{f}_{j}+\sum_{\alpha=1}^{l}v_{\alpha}({y}, Z)e_{\alpha},
\end{align} where \begin{align}\label{2.39}v_{j}({y},
Z)=(\tilde{f}_{j}f)({y}, Z),\ \ v_{\alpha}({y}, Z)=(e_{\alpha}f)({y}, Z).
\end{align}
Using (\ref{2.14}), we find that
\begin{align}\label{2.40}
v_{\alpha}({y}, Z)=-Z_{\alpha}, \ \textup{if}\ 1\leqslant \alpha\leqslant n^{-};
\ \ v_{\alpha}({y}, Z)=-Z_{\alpha}, \ \textup{if}\ n^{-}+1\leqslant \alpha \leqslant l.
\end{align}
From  (\ref{2.14}) and ({\ref{2.35}}), we have
\begin{align}\label{2.41}
v_{j}({y}, Z)=O(|Z|^{4}).
\end{align}
Substituting (\ref{2.40}) and (\ref{2.41}) into (\ref{2.38}), we get
\begin{align}\label{2.42}
\nabla f({y},Z)=v+O(|Z|^{4}).
\end{align}
Now (\ref{2.31}) follow immediately from (\ref{2.37}) and (\ref{2.42}).
 \end{proof}

Note that $D^{N}_{T}$ is actually an elliptic operator acting
fibrewise on $\pi^{\ast}(\Lambda N^{\ast})$. We now formalize Witten's description of
the spectrum of $D^{2}_{T}$ (\cite[pp. 674-675]{Witten82}) for the equivariant case by using the
argument of \cite[Prop.\,4.9]{Zhang01}.

\begin{thm}\label{t2.9}
For any $y\in B$, the restriction of $(D_{T}^{N})^2$
to $C^{\infty}(N_{y}, \Lambda N^{\ast}_{y})$ is a positive operator
with kernel generated by
\begin{align}\label{2.43}
\beta_{y}=\exp\!\big(-\tfrac{T{|Z|}^{2}}{2}\big)\,\theta_{y},\end{align}
where $\theta_{y}$ is the volume form of $N^{-}_{y}$. Moreover, all
the nonzero eigenvalues of $(D_{T}^{N})^{2}$ on
$C^{\infty}(N_{y}, \Lambda N^{\ast}_{y})$ are  $\geqslant 2T$.
\end{thm}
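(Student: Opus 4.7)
The plan is to reduce this fibrewise claim directly to Proposition \ref{t2.3}, by showing that on a single fibre $N_y$ the operator $D_{T}^{N}$ coincides, under the natural trivialization, with the Euclidean operator $D_{T,v}$ of Section \ref{s0.1} applied to the quadratic function $-|Z^{-}|^{2}/2+|Z^{+}|^{2}/2$. The bulk of the argument is therefore bookkeeping: identifying the connection, the Clifford action, and the gradient term in the model on $N_{y}$ with those appearing in (\ref{2.9})--(\ref{2.10}).

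First I would fix $y \in B$ and trivialize $\pi^{\ast}(\Lambda(T^{\ast}M)|_{B})$ along $N_{y}$. Because the vertical vectors $e_{\alpha}$ satisfy $\pi_{\ast}e_{\alpha}=0$, the defining property of the pullback connection gives $\pi^{\ast}(\nabla^{\Lambda(T^{\ast}M)|_{B}})_{e_{\alpha}}s = \partial_{e_{\alpha}}s$ once we identify each fibre $\pi^{\ast}(\Lambda(T^{\ast}M)|_{B})_{(y,Z)}$ with the constant $\Lambda(T^{\ast}M)_{y}$. Hence the restriction of $D^{N}$ to $N_{y}$ becomes exactly the flat Dirac-type operator $\sum_{\alpha=1}^{l} c(e_{\alpha})\partial_{e_{\alpha}}$ on the Euclidean space $N_{y}$, with coefficients in $\Lambda(T^{\ast}M)_{y}$.

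Next I would use the splitting $\Lambda(T^{\ast}M)_{y} = \Lambda T^{\ast}B|_{y}\otimes \Lambda N^{\ast}_{y}$ together with the Clifford relations (\ref{2.6}). Since every Clifford operator occurring in $D_{T}^{N}$ comes from a vector $e_{\alpha}\in N_{y}$, these operators act trivially on the $\Lambda T^{\ast}B|_{y}$ factor. Consequently $D_{T}^{N}$ preserves the subspace $C^{\infty}(N_{y},\Lambda N^{\ast}_{y})$, and its restriction there is the operator $d+\delta+T\widehat{c}(v)$ on $\Lambda N^{\ast}_{y}$-valued functions, with $V=N_{y}$, $V^{\pm}=N^{\pm}_{y}$, and $v(Z)=-\sum_{\alpha=1}^{n^{-}}Z_{\alpha}e_{\alpha}+\sum_{\alpha=n^{-}+1}^{l}Z_{\alpha}e_{\alpha}$ as in (\ref{2.7}). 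This is precisely the operator $D_{T,v}$ studied in Proposition \ref{t2.3}.

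Applying Proposition \ref{t2.3} to the Euclidean space $N_{y}$ with the decomposition $N_{y}=N^{-}_{y}\oplus N^{+}_{y}$ then yields at once that $(D_{T}^{N})^{2}$ on $C^{\infty}(N_{y},\Lambda N^{\ast}_{y})$ is positive with one-dimensional kernel spanned by $\exp(-T|Z|^{2}/2)\, e^{1}\wedge\cdots\wedge e^{n^{-}}$, and that the remaining eigenvalues are bounded below by $2T$. Since $e^{1}\wedge\cdots\wedge e^{n^{-}}$ is, by construction, the volume form $\theta_{y}$ of $N^{-}_{y}$, the generator of the kernel is exactly $\beta_{y}$ in (\ref{2.43}). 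The only subtle point — really the only thing that is not pure transcription — is the verification that the pullback connection reduces to the flat directional derivative in the vertical directions; once this is in hand, the rest is an application of the already-proved Euclidean statement.
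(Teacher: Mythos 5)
Your proof is correct and takes essentially the same route as the paper: both reduce the fibrewise operator to the Euclidean model of Section \ref{s0.1} and then invoke Proposition \ref{t2.3}. The only cosmetic difference is that you identify $D_{T}^{N}|_{N_y}$ directly with $D_{T,v}$ before squaring, while the paper first computes $(D_{T}^{N})^{2}$ (formulas (\ref{2.44})--(\ref{2.45})) and then recognizes the result as (\ref{2.10}); the substance, including the observation that the pullback connection is flat in the vertical directions so that $D^{N}$ restricts to $d+\delta$ on each fibre, is the same.
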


\begin{proof} Let $\Delta^{N}$ be the positive Laplacian along the fibres of
$N$.
From (\ref{2.30}), it is clear that on
$\pi^{\ast}\big(\Lambda (T^{\ast}M)|_{B}\big)=\pi^{\ast}\big(\Lambda T^{\ast}B\big)\otimes \Lambda N^{\ast}$,
\begin{align}\label{2.44}
(D_{T}^{N})^2=-\sum_{\alpha=1}^{l}
(\pi^{\ast}\nabla^{TM|_{B}}_{e_{\alpha}})^{2}+T^{2}|v|^{2}
+T\sum_{\alpha=1}^{l}c(e_{\alpha})
\widehat{c}(\pi^{\ast}\nabla^{TM|_{B}}_{e_{\alpha}}v).
\end{align}
By (\ref{2.7}), we obtain that
\begin{align}\label{2.45}(D_{T}^{N})^2=\Delta^{N}+T^{2}|Z|^{2}
-T\sum_{\alpha=1}^{n^{-}}c(e_{\alpha})\widehat{c}(e_{\alpha})
+T\sum_{\alpha=n^{-}+1}^{l}c(e_{\alpha})\widehat{c}(e_{\alpha}).\end{align}
Hence Theorem \ref{t2.9} follows
from Proposition \ref{t2.3}. \end{proof}

\subsection {Estimates of the components of $D_{T}$ as $T\rightarrow
+\infty$.}\label{s0.5}

In this section, we will give a decomposition of $D_{T}=
\sum_{j=1}^{4}D_{T, j}$ (see (\ref{2.53})) and establish estimates of $D_{T,
j}$ as $T\rightarrow +\infty$ by using Bismut-Lebeau analytic
localization techniques \cite{Bismut74}.

We denote by
$\textup{det}(N^{-})^{\ast}$ the determinant line bundle of
$(N^{-})^{\ast}$.
The connection $\nabla^{N^{-}}$ on $N^{-}$ induces naturally an Euclidean connection
$\nabla^{\textup{det}(N^{-})^{\ast}}$ on  $\textup{det}(N^{-})^{\ast}$.
Let $\Phi: \textup{det}(N^{-})^{\ast}
\rightarrow o(N^{-})$ denote the canonical isomorphism
over $B$. Let $\nabla^{o(N^{-})}$ be the Euclidean connection on $o(N^{-})$ induced by
$\nabla^{\textup{det}(N^{-})^{\ast}}$ via canonical isomorphism
$\Phi: \textup{det}(N^{-})^{\ast}\rightarrow o(N^{-})$.

For any $\mu>0$, let $\textrm{E}^{\mu}$ (resp. $\textrm{\bf
E}^{\mu}$, resp. $\textrm{F}^{\mu}$) be the set of sections of
$\Lambda (T^{\ast}M)$ on $M$ (resp. of
$\pi^{\ast}\big(\Lambda(T^{\ast}M)|_{B}\big)$ on the total space $N$, resp. of
$\Lambda (T^{\ast}B)\otimes o(N^{-})$ on $B$) which lies in the $\mu$-th Sobolev
spaces. Let $\big\|\cdot\big\|_{\textrm{E}^{\mu}}$ (resp. $\big\|\cdot
\big\|_{\textrm{\bf E}^{\mu}}$, resp. $\big\|\cdot
\big\|_{\textrm{F}^{\mu}}$) be the Sobolev norm on
$\textrm{E}^{\mu}$(resp. $\textrm{\bf E}^{\mu}$, resp.
$\textrm{F}^{\mu}$). We will always assume that the norm
$\big\|\cdot\big\|_{\textrm{E}^{0}}$ (resp. $\big\|\cdot\big\|_{{\bf E}^{0}}$)
is the norm associated with the Euclidean product (\ref{2.24}) (resp.
(\ref{2.25})). The norm $\big\|\cdot\big\|_{\textrm{F}^{0}}$ defined
on the sections of $\Lambda (T^{\ast}B)\otimes o(N^{-})$ is associated with a Euclidean
product similarly to (\ref{2.24}).

Take $\varepsilon\in (0, \frac{\varepsilon_{0}}{2}]$. Let $\varphi$
be a smooth function on $\mathbb{R}$ with values in [0, 1] such that
\begin{align}\label{2.46}
 \varphi(a) = \left\{%
 \begin{array}{ll}
 1 &\textrm{if }a\leqslant\frac{1}{2},\\
 0 &\textrm{if } a\geqslant 1.
 \end{array}
 \right.\end{align}
For $y\in B,Z\in N_{y}$, set \begin{align}\label{2.47}\rho({
Z})=\varphi\big(\tfrac{|Z|}{\varepsilon}\big). \end{align} For $T
>0$, set
\begin{align}\label{2.48}
\alpha_{T}(y)=\int_{{ N}_{y}}\textrm{exp}(-T {\left\vert {
Z}\right\vert}^2)\rho^2({ Z})dv_{N_{y}}({\textrm Z}).
\end{align}
Clearly, $y\mapsto \alpha_{T}(y)$ is a constant function on $B$.
Since for $|Z| \leqslant
\varepsilon/2, \ \rho({ Z})=1$, there exist $c>0, C>0$ such that for
$T \geqslant 1$, \begin{align} \label{2.49}
\frac{c}{T ^{l/2}}
\leqslant \alpha_{T } \leqslant \frac{C}{T ^{l/2}}. \end{align} Here
$l=m-n$ denotes the rank of $N$.

\begin{defn}\label{t2.10}
   For $\mu\geqslant 0, T
   >0$, define $J_{T}: \textrm{F}^{\mu}\rightarrow {\bf E}^{\mu}$ by
  \begin{align}\label{2.50}
  J_{T}s(y, Z)=\frac{1}{\sqrt{\alpha_{T}}}\, \rho({Z})\exp\!\big(\!-\tfrac{T
  |Z|^{2}}{2}\big) s(y)\wedge \theta_{y} \in {\bf E}^{\mu},\ \ s\in \textrm{F}^{\mu},
  \end{align}
where the smooth section
$\theta$ of $\Lambda ^{n^{-}}(N^{-})^{\ast}\otimes o(N^{-})$
is given by \begin{align}
u^{1}\wedge \ldots \wedge u^{n^{-}}\otimes
\Phi(u^{1}\wedge \ldots \wedge u^{n^{-}})\end{align}
for any orthonormal basis $\{u^{j}\}^{n^{-}}_{j=1}$ of $N^{-}_{y}$.
\end{defn}
It is easy to see that $J_{T}$ is an isometry from
$\textrm{F}^0$ onto its image.

For $\mu\geqslant 0, T >0$, let ${\bf E}^{\mu}_{T}$ be the image of
$\textrm{F}^{\mu}$ in $\textrm{\bf E}^{\mu}$ by $J_{T}$. Let ${\bf
E}^{0, \bot}_{T}$  be the orthogonal space to ${\bf E}^{0}_{T}$ in
${\bf E}^{0}$, and let $p_{T }, p^{\bot}_{T }$  be the orthogonal
projection operators from ${\bf E}^{0}$ on ${\bf E}^{0}_{T}, {\bf
E}^{0, \bot}_{T }$, respectively.

Recall that $\Lambda{(T^{\ast}M)}$ is  identified with
$\pi^{\ast}\big(\Lambda (T^{\ast}M)|_{B}\big)$ on ${\mathcal B}_{\varepsilon_{0}}\simeq {\mathcal
U}_{\varepsilon_{0}}$. Therefore if $s\in
\textrm{F}^{\mu}$, we can also consider $J_{T}s$ as an element of
$\textrm{E}^{\mu}$. Let ${\textrm E}^{\mu}_{T}$ be the image of
$\textrm{F}^{\mu}$ in $\textrm{E}^{\mu}$ by $J_{T}$. In particular,
${\bf E}_{T}^{0}$ may be identified isometrically with  $\textrm{E}^{0}_{T}$. Let
$\textrm{E}^{0, \bot}_{T}$ be the orthogonal space to
$\textrm{E}^{0}_{T}$ in $\textrm{E}^{0}$. Then $\textrm{E}^{0}$
splits orthogonally into
\begin{align}\label{2.51}
\textrm{E}^0=\textrm{E}^0_{T} \oplus \textrm{E}^{0,
\bot}_{T}.\end{align}
Let $\overline{p}_{T}, \overline{p}^{\bot}_{T
}$ be the orthogonal projection operators from $\textrm{E}^{0}$ on
$\textrm{E}^{0}_{T}, \textrm{E}^{0, \bot}_{T}$, respectively.
Since
$\textup{E}_{T}^{0}$ may be identified isometrically with ${\bf E}^{0}_{T}$,
we find that
\begin{align}\label{2.51a}
\overline{p}_{T}s=p_{T}s, \  \textup{for any} \ s\in {\textup{E}}^{0}\,,\
\operatorname{supp}(s)\subset {\mathcal B}_{\varepsilon_{0}}.
\end{align}
In particular,
\begin{align}\label{2.51b}
\overline{p}_{T}J_{T}s=p_{T}J_{T}s, \  \textup{for any} \ s\in {\textup{F}}^{0}.
\end{align}
According to the decomposition (\ref{2.51}) we set:
\begin{align}\begin{split}\label{2.52} D_{ T, 1}&= \overline{p}_{T}D_{ T}
\overline{p}_{T}, \ \ \  {D}_{T,
2}= \overline{p}_{T} D_{ T} \overline{p}^{\bot}_{T}, \\
D_{T, 3}&= \overline{p}^{\bot}_{T } D_{T}\overline{p}_{T}, \ \ \
D_{T, 4}= \overline{p}^{\bot}_{T}D_{T} \overline{p}^{\bot}_{T}.
\end{split}\end{align} Then
 \begin{align}\label{2.53}D_{T}= D_{T, 1}+ D_{T, 2}+
D_{T, 3}+ D_{T, 4}.\end{align}
We will now establish various estimates for the $D_{T, j}$'s as $
T\rightarrow +\infty$.
We define a twisted de Rham operator
\begin{align}\label{2.54}
D^{B}=\sum_{j=1}^{n}c(f_{j})\nabla_{f_{j}}^{B}: \Omega(B, o(N^{-}))
\rightarrow \Omega(B, o(N^{-})),
\end{align} where $\nabla^{B}=\nabla^{TB}\otimes 1+1\otimes \nabla^{o(N^{-})}$. The following
Lemma is similar to \cite[Th.\,9.8]{Bismut74} and \cite[Lemma 3.1]{Feng01}

\begin{prop} \label{t2.11} As $ T\rightarrow +\infty$, the
following formula holds
\begin{align}\label{2.55}J_{T}^{-1}D_{T,
1}J_{T}=D^{B}+O\big(\tfrac{1}{\sqrt{ T}}\big),\end{align} where
$O(\frac{1}{\sqrt{ T}})$ is a first order differential operator with
smooth coefficients dominated by $C/\sqrt{T}$.\end{prop}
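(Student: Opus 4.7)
The plan is to evaluate $J_T^{-1}D_{T,1}J_T$ by testing against sections on $B$. Three ingredients will drive the computation: the local expansion of $D_T$ near $B$ from Theorem \ref{t2.8}, the fact that the Gaussian profile $\beta_y$ lies in the fibrewise kernel of $D_T^N$ by Theorem \ref{t2.9}, and standard Gaussian moment bounds of the form
$\int_{N_y}|Z|^{2k}\,e^{-T|Z|^2}\rho^2\,dv_{N_y}/\alpha_T = O(T^{-k})$.

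First, for $s_1,s_2\in\textrm{F}^\infty$ I would rewrite
$$\big\langle J_T^{-1}D_{T,1}J_T s_1, s_2\big\rangle_{\textrm{F}^0} = \big\langle \overline{p}_T D_T J_T s_1, J_T s_2\big\rangle_{\textrm{E}^0} = \big\langle D_T J_T s_1, J_T s_2\big\rangle_{\textrm{E}^0},$$
using that $J_T$ is isometric onto its image and that $J_T s_2\in\textrm{E}^0_T$, so $\overline{p}_T J_T s_2 = J_T s_2$. Since both $J_T s_j$ are supported in $\mathcal{B}_{\varepsilon_0}$, Theorem \ref{t2.8} lets me replace $D_T$ there by $D^H + D_T^N + R$ with $R=O(|Z|^2\partial^N+|Z|^2\partial^H+|Z|+T|Z|^4)$.

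Second, I would compute the $D^H$ contribution exactly. Because $\nabla^N$ is Euclidean, horizontal parallel transport preserves $|Z|$, so the horizontal lifts $f_j^H$ annihilate $\rho$, $e^{-T|Z|^2/2}$ and the constant $\alpha_T$. The section $\theta$ is parallel under $\nabla^{\det(N^-)^*}\otimes\nabla^{o(N^-)}$, and $c(f_j)$ commutes (up to sign) with wedging by the vertical form $\theta$ since $i_{f_j}\theta = 0$; together with the contraction of the two $o(N^-)$ factors in $s$ and $\theta$, these yield the exact identity $D^H(J_T s) = J_T(D^B s)$. This produces the leading term $\langle D^B s_1, s_2\rangle_{\textrm{F}^0}$ in the pairing.

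Third, I would show that $D_T^N J_T s$ is exponentially small. By Theorem \ref{t2.9}, $D_T^N$ annihilates $\alpha_T^{-1/2}e^{-T|Z|^2/2}\theta_y$ fibrewise, and $s\wedge\theta$ is a horizontal pullback (so $\pi^*\nabla$ vanishes on it vertically), leaving only the term in which $D^N$ differentiates the cutoff:
$$D_T^N(J_T s) = \alpha_T^{-1/2}\, c(d\rho)\, e^{-T|Z|^2/2}\, s\wedge\theta,$$
supported in $\{\varepsilon/2\leqslant|Z|\leqslant\varepsilon\}$. There the Gaussian factor is $O(e^{-cT})$, so the contribution to the matrix element is $O(e^{-cT})$.

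Fourth, I would estimate $\langle R J_T s_1, J_T s_2\rangle_{\textrm{E}^0}$ term by term using the moment bounds: the $|Z|^2\partial^H$ piece yields $O(T^{-1})\cdot\partial_B s_1$; the scalar $|Z|$ piece yields $O(T^{-1/2})\cdot s_1$; $T|Z|^4$ yields $O(T^{-1})\cdot s_1$; and $|Z|^2\partial^N$, when the vertical derivative hits the Gaussian, produces a $T|Z|^3$ weight that integrates to $O(T^{-1/2})$, while hitting the cutoff is exponentially small. Summing, all contributions are bounded by $CT^{-1/2}$ times norms of $s_1,s_2$ involving at most one base derivative of $s_1$. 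Combining the four steps gives $J_T^{-1}D_{T,1}J_T = D^B + O(T^{-1/2})$ with the error a first-order operator on $B$ whose coefficients are dominated by $C/\sqrt{T}$, which is precisely (\ref{2.55}).

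The main obstacle is step four: careful bookkeeping of which factor of $J_T s$ (the cutoff $\rho$, the Gaussian, the pullback $s$, or $\theta$) each piece of $R$ differentiates, separating exponentially small boundary contributions due to $\rho$ from polynomial Gaussian moments, and checking that mixed cross-terms do not spoil the $O(T^{-1/2})$ bound. The geometric simplifications from Section \ref{s0.2}, namely that $B$ is totally geodesic in $N$ (Lemma \ref{t2.6}) and that $\theta$ is parallel, keep the Christoffel corrections absent at leading order and make this estimate noticeably cleaner than the analogous computation in \cite{Bismut74}.
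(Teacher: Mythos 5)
Your proposal follows essentially the same route as the paper's proof: expand $D_T$ near $B$ via Theorem \ref{t2.8}, verify the exact identity $D^H J_T s = J_T D^B s$ using that $\nabla^N_{f_j^H}Z=0$ and $\nabla^N\theta=0$, dispose of the $D_T^N$ contribution, and control the remainder $R_T$ by Gaussian moment estimates of the form $\|p_T|Z|^\gamma s\|\leqslant CT^{-\gamma/2}\|s\|$. The one place you are slightly weaker than the paper: after observing that $D_T^N J_T s = (-1)^{|s|}\alpha_T^{-1/2}e^{-T|Z|^2/2}\,s\wedge c(\nabla\rho)\theta$ is supported in $\{\varepsilon/2\leqslant|Z|\leqslant\varepsilon\}$, you conclude the matrix element is only $O(e^{-cT})$, whereas the paper observes that $\langle c(\nabla\rho)\theta_y,\theta_y\rangle=0$ (since $c(\nabla\rho)$ changes the vertical degree), so that $p_T D_T^N p_T J_T=0$ exactly; both suffice for the $O(T^{-1/2})$ bound, so this is not a gap, merely a less sharp observation.
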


\begin{proof} We can proceed as in \cite[Th.\,9.8]{Bismut74}, \cite[Lemma\,3.1]{Feng01}.
The proof becomes easier because of the simpler local
formula (\ref{2.42}) of the gradient of $f$.
By (\ref{2.31}),
 \begin{align}\label{2.56}
 D_{T, 1}=\overline{p}_{T}D_{ T}
\overline{p}_{T}=\overline{p}_{T}\big(D^{H}+D^{N}_{T}+R_{T}\big)\overline{p}_{T},
\end{align} where
\begin{align}\label{2.57}
R_{T}=O(|Z|^{2}\partial^{H}+|Z|^{2}\partial^{N}+|Z|+T|Z|^{4}).
\end{align}
From (\ref{2.51a}), (\ref{2.51b}) and (\ref{2.56}), we find that
\begin{align}\label{2.58}
J_{T}^{-1}D_{T, 1}J_{T}=
J_{T}^{-1}p_{T}\big(D^{H}+D^{N}_{T}+R_{T}\big)p_{T}J_{T}.
\end{align}
We may write out the projection $p_{T}$ explicitly.
From (\ref{2.50}), one verifies directly that for $s\in {\bf E}^0$,
\begin{align}\begin{split} \label{2.59}
p_{T}s(y, Z)=&\frac{1}{\alpha_{T}(y)}\rho(Z)
\exp\!\big(\!-\tfrac{T{|Z|}^2}{2}\big)  \\& \int_{N_{y}}\big\langle s(y, Z'),
\theta_{y}\big\rangle\rho(Z')\exp\!\big(\!-\tfrac{T{\left\vert
Z'\right\vert}^2}{2}\big)dv_{N_{y}}(Z')\wedge \theta_{y}.
\end{split}\end{align}
From (\ref{2.15}) and \cite[Prop.\,1.20]{Berline04},
we find
\begin{align}\label{2.60}
\nabla^{N}_{f^{H}_{j}}Z=0, \ \ \nabla^{N}\theta_{y}=0.
\end{align}
For $\ s\in \textrm{F}^{1}$, \eqref{2.60} yields
\begin{align}\begin{split}\label{2.61}
D^{H}J_{T}s(y, Z)=&\sum_{j=1}^{n}c(f_{j})\pi^{\ast}\nabla^{\Lambda (T^{\ast}M)|_{B}}_{f_{j}^{H}}
\left[\frac{1}{\sqrt{\alpha_{T}}}\,\rho(Z)\exp\!\big(\!-\tfrac{T{|Z|}^2}{2}\big)
 s(y)\wedge \theta_{y}\right]\\=&
\frac{1}{\sqrt{\alpha_{T}}}\,
\rho(Z)\exp\!\big(\!-\tfrac{T{|Z|}^2}{2}\big)
  \sum_{j=1}^{n}c(f_{j}) \pi^{\ast}\big(\nabla^{B}_{f_{j}}s(y)\big)\wedge \theta_{y}
\\=&J_{T}D^{B}s(y).
\end{split}\end{align}
For $s\in \textrm{F}^0$
\begin{align}\label{2.62} D_{T}^{N}J_{T}s
=\frac{(-1)^{|s|}}{\sqrt{\alpha_{T}}}\textrm{exp}(-\frac{T {|
Z|}^{2}}{2}) s(y)\wedge c\big(\nabla
\rho(Z)\big)\theta_{y},
\end{align} where $|s|$ denotes the degree of $s$ and $\nabla\rho(Z)$ is calculated in the
fiber direction, i.e.,
\begin{align}\label{2.63}
\nabla\rho(Z)=\sum_{\alpha=1}^{l}(e_{\alpha}\rho)(Z)e_{\alpha}.
\end{align}
From (\ref{2.59}), (\ref{2.62}) and (\ref{2.63}), we get that
\begin{align}\label{2.64}
p_{T}D_{T}^{N}p_{T}J_{T}s=0.\end{align}
For the term containing $R_{T}$, one verifies directly that when $ T\geqslant
1, \gamma\in \mathbb{R}, \ s\in {\bf E}^0,$
\begin{align}\label{2.65}\big\|p_{T}|Z|^{\gamma}s\big\|_{{\bf E}^0} \leqslant
\frac{C}{T^{\frac{\gamma}{2}}} \big\|s \big\|_{{\bf E}^{0}}.
\end{align}
Using (\ref{2.57}) and (\ref{2.65}), we find
\begin{align}\label{2.68}
J^{-1}_{T}p_{T}R_{T}p_{T}J_{T}=O\big(\tfrac{1}{\sqrt{T}}\big)\,,\quad T\rightarrow \infty.
\end{align}
Finally (\ref{2.61}), (\ref{2.64}) and (\ref{2.68}) imply the conclusion
of Proposition \ref{t2.11}.
\end{proof}
Set \begin{align*}{\textrm E}^{\mu, \bot}_{T}={\textrm E}^{\mu}\cap
{\textrm E}^{0, \bot}_{T}.\end{align*}
Similarly to the proof of Theorems 9.10, 9.11 and 9.14 from
\cite[\S9]{Bismut74}, we also have the following results.

\begin{lemma}\label{t2.13} There exists $T_{0}>0,\ C_{1}>0, C_{2}>0$ such that for any
$T\geqslant T_{0}, s\in \textup {E}^{1, \bot}_{T}, s_{1}\in
\textup{E}^{1}_{T}$, we have
\begin{align}\begin{split}\label{2.70}
\big\|{D}_{T,2}s\big\|_{\textup{E}^0} \leqslant& \frac{C_{1}}{\sqrt{T}}
\big\|s\big\|_{\textup{E}^1}, \\
\big\|{ D}_{ T, 3}s_{1}\|_{\textup{E}^0} \leqslant& \frac{C_{1}}{\sqrt
{T}} \|s_{1}\|_{{\textup
E}^1}, \\
\big\|D_{T, 4}s\big\|_{\textup{E}^0} \geqslant& C_{2}(\big\|s\big\|_{{\textup E}^1}+{\sqrt
{T}}\big\|s\big\|_{\textup{E}^{0}}).\end{split} \end{align} \end{lemma}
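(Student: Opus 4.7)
The plan is to adapt the strategy of Theorems 9.10, 9.11 and 9.14 of \cite{Bismut74}, exploiting the simple local expansion (\ref{2.31}) made possible by Lemma \ref{t2.6} and the explicit formula (\ref{2.59}) for $\overline{p}_T$.

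\smallskip

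\textbf{Upper bounds on $D_{T,2}$ and $D_{T,3}$.} Any $s_1 \in \textup{E}^1_T$ may be written as $s_1 = J_T s$ with $s \in \textup{F}^1$; one checks that $\|J_T s\|_{\textup{E}^1}$ is comparable to $\|s\|_{\textup{F}^1} + \sqrt{T}\|s\|_{\textup{F}^0}$. By (\ref{2.31}),
$D_T J_T s = D^H J_T s + D^N_T J_T s + R_T J_T s.$
The first term equals $J_T D^B s \in \textup{E}^0_T$ by (\ref{2.61}) and is therefore annihilated by $\overline{p}^\perp_T$. The second, by (\ref{2.62}), is supported in the annulus $\{\varepsilon/2 \leq |Z| \leq \varepsilon\}$ with Gaussian weight $\exp(-T|Z|^2/2)/\sqrt{\alpha_T}$, so its $\textup{E}^0$-norm is $O(e^{-cT})$. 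Each summand of $R_T$ in (\ref{2.57}) is $|Z|^\gamma$ times a first-order operator with $\gamma \geq 1$ (the term multiplied by $T$ has $\gamma = 4$, so $T \cdot T^{-2} = T^{-1}$); applying (\ref{2.65}) produces a factor $T^{-\gamma/2}$, for a total contribution $O(1/\sqrt{T})\|s_1\|_{\textup{E}^1}$. This settles $D_{T,3}$. The bound for $D_{T,2}$ is analogous: for $s \in \textup{E}^{1,\perp}_T$ one writes $\overline{p}_T D_T s$ using the same expansion, using that the Gaussian kernel of $\overline{p}_T$ forces contributions from outside $\mathcal{U}_{\varepsilon_0}$ to decay exponentially in $T$, while inside $\mathcal{U}_{\varepsilon_0}$ the orthogonality $s \perp \textup{E}^0_T$ together with (\ref{2.64}) eliminates the leading term.

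\smallskip

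\textbf{Lower bound on $D_{T,4}$.} Fix a partition of unity $\phi_1^2 + \phi_2^2 = 1$ on $M$ with $\phi_1 \equiv 1$ on $\mathcal{U}_{\varepsilon_0/4}$ and $\supp \phi_1 \subset \mathcal{U}_{\varepsilon_0/2}$, so that $|\nabla f|^2 \geq c_0 > 0$ on $\supp\phi_2$. An IMS-type localization lets us control $\|D_T s\|^2_{\textup{E}^0}$ by $\|D_T\phi_1 s\|^2_{\textup{E}^0} + \|D_T\phi_2 s\|^2_{\textup{E}^0}$ modulo a commutator error of size $\|s\|^2_{\textup{E}^0}$, absorbable for $T$ large. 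Away from $B$, the Weitzenb\"ock-type identity
$D_T^2 = (D^M)^2 + T^2|\nabla f|^2 + T\sum_j c(f_j)\widehat{c}(\nabla_{f_j}\nabla f)$
combined with elliptic regularity of $D^M$ gives $\|D_T \phi_2 s\|^2_{\textup{E}^0} \geq c_1(\|\phi_2 s\|^2_{\textup{E}^1} + T\|\phi_2 s\|^2_{\textup{E}^0})$. Near $B$, substitute (\ref{2.31}) and invoke Theorem \ref{t2.9}: the fibrewise operator $(D^N_T)^2$ has spectral gap $2T$ above its one-dimensional kernel spanned by $\beta_y$, whose image under the cutoff construction generates $\textup{E}^0_T$. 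Since $s$ is orthogonal to $\textup{E}^0_T$, we obtain $\|D_T \phi_1 s\|^2_{\textup{E}^0} \geq c_2(\|\phi_1 s\|^2_{\textup{E}^1} + T\|\phi_1 s\|^2_{\textup{E}^0})$, modulo the commutator $[\phi_1, \overline{p}_T]$ and the remainder $R_T$ (which by the analysis above contributes $O(1/\sqrt{T})\|s\|_{\textup{E}^1}$). Summing the two regional bounds and adjusting $T_0$ upward produces the desired inequality.

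\smallskip

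\textbf{Main obstacle.} The principal technical difficulty is that $\phi_1$ does not commute with $\overline{p}_T$, so the orthogonality $s \perp \textup{E}^0_T$ does not immediately transfer to the localized section $\phi_1 s$. However, the explicit Gaussian kernel of $\overline{p}_T$ from (\ref{2.59}), together with the fact that $\phi_1$ is constant on a neighborhood of $B$, forces $\|[\phi_1,\overline{p}_T]s\|_{\textup{E}^0} = O(e^{-cT})\|s\|_{\textup{E}^0}$, which is negligible compared with the $\sqrt{T}$ gain we need. Once this commutator estimate is in hand, the remainder of the argument is a routine balancing of constants, and the $\textup{E}^1$-norm on the right-hand side of the third inequality is recovered by standard elliptic estimates for $D_T$.
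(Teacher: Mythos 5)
The paper does not spell out a proof of this lemma; it simply appeals to Theorems 9.10, 9.11 and 9.14 of Bismut--Lebeau and their adaptations in \cite{Feng01} and \cite{Zhang01}, and your proposal reconstructs that argument along the intended lines. The skeleton you give for $D_{T,3}$ and $D_{T,4}$ is sound: the expansion (\ref{2.31}), the intertwining identity (\ref{2.61}), the exponential decay of $D^{N}_{T}J_{T}$ coming from (\ref{2.62}), the $|Z|^{\gamma}$-calculus as in (\ref{2.65}) (though for $D_{T,3}$ one should estimate $\||Z|^{\gamma}J_{T}s\|_{\textup{E}^{0}}$ directly from the Gaussian in $J_{T}$ rather than quote (\ref{2.65}), which is stated for $p_{T}$), and for $D_{T,4}$ an IMS localization combining the Weitzenb\"ock identity on $\operatorname{supp}\phi_{2}$, where $|\nabla f|^{2}\geqslant c_{0}>0$, with the fibrewise spectral gap of Theorem \ref{t2.9} near $B$. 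You also rightly identify the commutator $[\phi_{1},\overline{p}_{T}]$ as the delicate point in the lower bound and give the correct reason why it is negligible.

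However, one step in your $D_{T,2}$ estimate is wrong as stated. You invoke (\ref{2.64}) to eliminate the leading term, but (\ref{2.64}) reads $p_{T}D^{N}_{T}p_{T}J_{T}s=0$: it describes $p_{T}D^{N}_{T}$ acting on $\textup{E}^{0}_{T}$, i.e.\ a piece of $D_{T,1}$, whereas for $D_{T,2}$ the input $s$ satisfies $\overline{p}_{T}s=0$, so (\ref{2.64}) says nothing about $\overline{p}_{T}D^{N}_{T}\overline{p}^{\bot}_{T}$. The correct mechanism is two separate observations. First, by (\ref{2.61}) the operator $D^{H}$ maps $\textup{E}^{0}_{T}$ into itself, and since $D^{H}$ is formally self-adjoint for (\ref{2.25}) it commutes with $\overline{p}_{T}$, so $\overline{p}_{T}D^{H}s=D^{H}\overline{p}_{T}s=0$. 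Second, $\overline{p}_{T}D^{N}_{T}$ is the $L^{2}$-adjoint of $D^{N}_{T}\overline{p}_{T}$, and by (\ref{2.62}) the latter is supported in $\{\varepsilon/2\leqslant|Z|\leqslant\varepsilon\}$ with weight $e^{-T|Z|^{2}/2}/\sqrt{\alpha_{T}}$, hence $O(e^{-cT})$ in $L^{2}$-operator norm, and this bound transfers to the adjoint. With these two facts in place, the remainder $R_{T}$ is handled by (\ref{2.65}) exactly as in your $D_{T,3}$ argument, and the claimed bound on $D_{T,2}$ follows.
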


\subsection{Proof of (\ref{1.9}) and (\ref{1.10})}\label{s0.6}

In the first part of this section,
we prove Proposition \ref{t2.2} and then (\ref{1.9}) and (\ref{1.10}).
Let $C_{0}\in (0, 1]$ be a constant such that
\begin{align}\label{2.71}
\textup{Spec} \ (D^{B}) \cap [-2\sqrt{C_{0}},
2\sqrt{C_{0}}]\subset \{0\},
\end{align}
where $\textup{Spec}\ (D^{B})$ denotes the spectrum of the operator $D^{B}$.

Let $\mathcal{L}(\textrm{E}^{0})$ denote the space of all bounded linear operators from
$\textrm{E}^{0}$ into itself. For $A\in \mathcal{L}(\textrm{E}^{0})$ and $T\geqslant 1$, we write $A$ as
a matrix with respect to the splitting $\textrm{E}^{0}=\textrm{E}^{0}_{T}
\oplus \textrm{E}^{0,\bot}_{T}$ in the form
\begin{align}\label{2.72}
A=\begin{pmatrix}A_{1}& A_{2}
\\ A_{3}&A_{4}\end{pmatrix}.
\end{align}

\begin{defn}
For $A\in \mathcal{L}(\textrm{E}^{0}), P\in \mathcal{L}(\textrm{F}^{0})$, set
\begin{align}\label{2.73}
d(A,P)=\sum_{j=2}^{4}\big\|A_{j}\big\|_{1}+\big\|J^{-1}_{T}A_{1}J_{T}-P\big\|_{1},
\end{align} where the operator norm $\big\|\cdot\big\|_{1}$ is given by
$\big\|A\big\|_{1}=\textup{Tr}\big[(A^{\ast}A)^{\frac{1}{2}}\big]$. \end{defn}

Let $\textrm{F}_{T}^{C_{0}}$ be the direct sum of eigenspaces of $D^{2}_{T}$ associated to the
eigenvalues lying in $[0, C_{0})$. Let $P_{T}^{C_{0}}$ be the orthogonal
projection operator from $\textrm {E}^{0}$ on
$\textrm{F}_{T}^{C_{0}}$. Let $Q$ be the orthogonal projection from $\textrm {F}^{0}$ to $K=\textrm {Ker} \ D^{B}$. Similar to
\cite[(9.155)]{Bismut74},
we also have the following:

\begin{prop}\label{t2.29} For $T$ large enough, we have
\begin{align}\label{2.74}
d(P^{C_{0}}_{T}, Q)\leqslant \frac{C}{\sqrt{T}}.
\end{align}
\end{prop}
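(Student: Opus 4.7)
The plan is to represent both $P^{C_0}_T$ and $Q$ as contour integrals of resolvents and to compare them via a Schur complement applied to the block decomposition of $\lambda - D_T$ with respect to $\textup{E}^{0}=\textup{E}^{0}_{T}\oplus\textup{E}^{0,\bot}_{T}$. First I fix a simple closed contour $\Gamma\subset\mathbb{C}$ of bounded length that encloses $[-\sqrt{C_{0}},\sqrt{C_{0}}]$ on the real axis and, by (\ref{2.71}), separates $0\in\textup{Spec}(D^{B})$ from the rest of $\textup{Spec}(D^{B})$, giving
\begin{align*}
Q=-\frac{1}{2\pi i}\oint_{\Gamma}(\lambda-D^{B})^{-1}\,d\lambda.
\end{align*}
For $T$ large enough I will show that $\Gamma\cap\textup{Spec}(D_{T})=\emptyset$, so that the Riesz projection $P_{T}:=-\frac{1}{2\pi i}\oint_{\Gamma}(\lambda-D_{T})^{-1}\,d\lambda$ is well-defined and coincides with $P^{C_{0}}_{T}$.

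The core technical step is the block analysis of $(\lambda-D_{T})^{-1}$. For $\lambda\in\Gamma$ and $T$ large, the third estimate of Lemma \ref{t2.13} shows $\lambda-D_{T,4}$ is invertible with $\|(\lambda-D_{T,4})^{-1}\|=O(1/\sqrt{T})$ from $\textup{E}^{0}$ to itself and $O(1)$ from $\textup{E}^{0}$ to $\textup{E}^{1}$, uniformly in $\lambda\in\Gamma$. Combined with $\|D_{T,2}s\|_{\textup{E}^{0}},\|D_{T,3}s\|_{\textup{E}^{0}}\leqslant(C_{1}/\sqrt{T})\|s\|_{\textup{E}^{1}}$, the Schur complement
\begin{align*}
S_{T}(\lambda):=\lambda-D_{T,1}-D_{T,2}(\lambda-D_{T,4})^{-1}D_{T,3}
\end{align*}
differs from $\lambda-D_{T,1}$ by an operator of order $1/T$ between the relevant Sobolev spaces. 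By Proposition \ref{t2.11}, $J_{T}^{-1}S_{T}(\lambda)J_{T}=\lambda-D^{B}+O(1/\sqrt{T})$, and inverting by Neumann series together with the standard block inverse formula yields, uniformly in $\lambda\in\Gamma$,
\begin{align*}
J_{T}^{-1}\bigl[(\lambda-D_{T})^{-1}\bigr]_{1}J_{T}&=(\lambda-D^{B})^{-1}+O(1/\sqrt{T}),\\
\bigl[(\lambda-D_{T})^{-1}\bigr]_{j}&=O(1/\sqrt{T}),\quad j=2,3,4,
\end{align*}
as operators on the relevant spaces. This in particular confirms $\Gamma\cap\textup{Spec}(D_{T})=\emptyset$, identifies $P_{T}=P^{C_{0}}_{T}$, and, after integration along $\Gamma$, gives the corresponding $O(1/\sqrt{T})$ bounds on each block of $P^{C_{0}}_{T}$ in operator norm.

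The main obstacle is upgrading these operator-norm bounds to the trace-norm bounds $\|\cdot\|_{1}$ required by the distance $d(\cdot,\cdot)$. This is handled by exploiting the fact that $Q$ is a finite-rank projection of rank $\dim K<\infty$: operator-norm closeness between $P_{T}$ and the trivially extended operator $J_{T}QJ_{T}^{-1}$ forces $\textup{rank}(P_{T})=\dim K$ for $T$ large, so each block $[P_{T}]_{j}$ has rank at most $\dim K$ and its trace norm is bounded by $\dim K$ times its operator norm, giving the required $C/\sqrt{T}$ estimate; this simultaneously provides the rank statement of Proposition \ref{t2.2}. A secondary concern is the uniformity of all $\lambda$-dependent estimates along $\Gamma$, which follows from the compactness of $\Gamma$ together with the uniform Sobolev mapping properties afforded by Lemma \ref{t2.13} and the explicit constants in Proposition \ref{t2.11}.
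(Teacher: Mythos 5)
Your argument is correct and is precisely the Bismut--Lebeau resolvent-contour method that the paper invokes (without reproducing it) by citing \cite[(9.155)]{Bismut74}: you realize $P^{C_0}_T$ and $Q$ as Riesz projections over a common contour avoiding the spectra, control the blocks of $(\lambda-D_T)^{-1}$ through the Schur complement and the estimates of Lemma~\ref{t2.13} and Proposition~\ref{t2.11}, and pass from operator-norm to the trace-norm bound in the definition of $d(\cdot,\cdot)$ by the rank-stability of projections, since $Q$ has finite rank $\dim K$. This is exactly the intended proof.
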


\begin{proof}[Proof of Proposition \ref{t2.2}]
From (\ref{2.74}), we see that for $T$ large enough,
\begin{align}\label{2.75}
\textup{dim} \ \textrm{F}_{T}^{C_{0}}=\textrm{dim} \ K.
\end{align} Let $P_{j}$ denote the orthogonal projection
operator from $\textrm{E}^{0}$ onto the $L^2$-completion
space of $\Omega^{j}(M)$ with respect to the metric
(\ref{2.24}). We need to show that when $T$ is large enough,
\begin{align}\label{2.76}
\textup{dim} \ P_{j} \big(\textup{F}_{T}^{C_{0}}\big)= q_{j}. \end{align}
By (\ref{2.75}), we find that
\begin{align} \label{2.77}
\sum_{j=0}^{m}\textrm{dim} \ P_{j}
\big(\textrm{F}_{T}^{C_{0}}\big)\leqslant \textrm{dim} \
\textrm{F}_{T}^{C_{0}}=\sum_{j=0}^{m}q_{j}.
\end{align}
Also, we find that for any $s_{j}\in F_{j},
\big\|s_{j}\big\|_{\textrm{F}^{0}}=1$,
\begin{align}\begin{split}\label{2.771}
\big\|P_{j}P_{T}^{C_{0}}J_{T}s_{j}-J_{T}s_{j}\big\|_{{\bf
E}^{0}} \leqslant
d(P_{T}^{C_{0}}, Q).
\end{split}\end{align} Thus from (\ref{2.74}) and (\ref{2.771}),  we have
for
 $s\in K$,
\begin{align}\label{2.78}
\big\|P_{j}P_{T}^{C_{0}}J_{T}s-J_{T}s\big\|_{{\bf E}^{0}}\leqslant
\frac{C}{\sqrt{T}}\big\|s\big\|_{\textup{F}^{0}}. \end{align}
From (\ref{2.78}), one deduces that for sufficiently large $T$,
\begin{align}\label{2.79}\textrm{dim} \ P_{j}
\big(\textrm{F}_{T}^{C_{0}}\big)\geqslant q_{j}.
\end{align}  From (\ref{2.77}) and (\ref{2.79}),
we get (\ref{2.76}). Since the action of $G$ commutes with the deformed de Rham operator
$D_{T}$, the eigenspaces of $D^{2}_{T}$ with eigenvalues in $[0, C_{0})$ are $G$-vector spaces.
This completes the proof of Proposition \ref{t2.2}.
\end{proof}

Let $\textrm{F}^{C_{0}}_{T, j}$ denotes the $q_{j}$-dimensional vector
space generated by the eigenspaces of $D^2_{T}|_{\Omega^j(M)}$
associated with the eigenvalues lying in $[0, C_{0}), j=0, 1, \ldots, m$.
Then $G$ maps $\textrm{F}^{C_{0}}_{T, j}$ into itdelf.

Recall that the isometric map $J_{T}:
\textrm{F}^{0}\rightarrow \textrm{E}^{0}$ is defined by (\ref{2.50}).
We define the map  $e_{T}: \textrm{F}^{0}\rightarrow \textrm{F}^{C_{0}}_{T}$ by
$e_{T}=P^{C_{0}}_{T}J_{T}$. We will prove that $e_{T}$ is
a $G$-isomorphism from  $F_{j}$ onto its image when $T$ is large enough.

\begin{lemma}\label{t3.1}
There exists $C>0$ such that for any $s\in F_{j}$,
\begin{align}\label{3.1}\big\|(e_{T}-J_{T})s\big\|_{\textup{E}^{0}}
=O\big(\tfrac{C}{\sqrt{T}}\big)\big\|s\big\|_{\textrm{F}^{0}} \
\ \ \ \text{as $T\rightarrow +\infty$}\,. \end{align}  In particular,
$e_{T}$ is an $G$-isomorphism from $F_{j}$ onto $\mathrm{F}^{C_{0}}_{T,j}$.
\end{lemma}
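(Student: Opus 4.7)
The plan is to read off (\ref{3.1}) from Proposition \ref{t2.29} after expanding $e_T - J_T$ in the block form (\ref{2.72}) associated with the splitting $\textup{E}^0 = \textup{E}^0_T \oplus \textup{E}^{0,\bot}_T$. Since $J_T$ is an isometry from $\textup{F}^0$ onto $\textup{E}^0_T$, the image $J_T s$ satisfies $\overline p_T J_T s = J_T s$ and $\overline p^\bot_T J_T s = 0$, so writing $P^{C_0}_T = A_1 + A_2 + A_3 + A_4$ yields
\begin{align*}
e_T s - J_T s = (A_1 - \Id_{\textup{E}^0_T}) J_T s + A_3 J_T s.
\end{align*}
I would then bound the two pieces separately using the definition (\ref{2.73}) of $d(P^{C_0}_T, Q)$.

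For the $A_3$-piece, $\|A_3 J_T s\|_{\textup{E}^0} \leqslant \|A_3\|_1 \|J_T s\|_{\textup{E}^0} = \|A_3\|_1 \|s\|_{\textup{F}^0}$ by the isometry property of $J_T$. For the first piece I would transport by $J_T$ back to $\textup{F}^0$: the key observation is that $s \in F_j$ represents a de Rham--Hodge harmonic class in $\Ker D^B$, so $Qs = s$, and therefore
\begin{align*}
(J^{-1}_T A_1 J_T - \Id_{\textup{F}^0})\,s = (J^{-1}_T A_1 J_T - Q)\,s,
\end{align*}
whose norm is bounded by $\|J^{-1}_T A_1 J_T - Q\|_1 \|s\|_{\textup{F}^0}$. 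Adding the two bounds and invoking Proposition \ref{t2.29} and the definition (\ref{2.73}) gives (\ref{3.1}).

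For the ``in particular'' assertion I would argue as follows. Because $D^2_T = d_T\delta_T + \delta_T d_T$ preserves the total form degree, $\textup{F}^{C_0}_T = \bigoplus_j \textup{F}^{C_0}_{T, j}$ and $P^{C_0}_T$ commutes with the degree projector $P_j$. The wedge with $\theta_y$ in (\ref{2.50}) shifts the degree of the $B_i$-component of $s \in F_j$ by $n^-_i$, exactly matching the shift built into (\ref{1.6}); hence $J_T s$ has total degree $j$ and $e_T s \in \textup{F}^{C_0}_{T, j}$. The estimate (\ref{3.1}) makes $e_T$ lie within $O(1/\sqrt T)$ in operator norm of the isometric embedding $J_T$ on $F_j$, so $e_T$ is injective for $T$ large; since $\dim F_j = q_j = \dim \textup{F}^{C_0}_{T, j}$ by Proposition \ref{t2.2}, this forces $e_T\colon F_j \to \textup{F}^{C_0}_{T, j}$ to be a bijection. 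Finally, $J_T$ is $G$-equivariant because $\rho(Z)$, $\exp(-T|Z|^2/2)$ and $\alpha_T$ depend only on the $G$-invariant norm $|Z|$ and $\theta$ is $G$-invariant by its intrinsic definition, while $P^{C_0}_T$ is $G$-equivariant because $G$ preserves $g^{TM}$ and $f$ and hence commutes with $D_T$; thus $e_T = P^{C_0}_T J_T$ is $G$-equivariant. I do not anticipate any serious obstacle here: the analytic content is entirely concentrated in Proposition \ref{t2.29}, and the only genuine subtlety is the degree-shift bookkeeping needed to land in $\textup{F}^{C_0}_{T, j}$ rather than merely in $\textup{F}^{C_0}_T$.
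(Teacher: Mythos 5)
Your proof is correct and follows essentially the same route as the paper: decompose $e_T - J_T$ via the blocks of $P^{C_0}_T$, use that $J_T s \in \textup{E}^0_T$ to reduce to $(A_1 - \Id)J_T s + A_3 J_T s$, bound each piece by $d(P^{C_0}_T, Q)$ from Proposition~\ref{t2.29} (with $Qs=s$ on harmonic representatives), then get the isomorphism from injectivity plus the dimension count of Proposition~\ref{t2.2}, and the $G$-equivariance from $J_T$ and $D_T$ commuting with $G$. Your added remark on the degree-shift bookkeeping fills in a step the paper merely asserts, but the argument is the same.
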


\begin{proof} It is clear that $e_{T}$ maps $F_{j}$ into
$\textrm{F}^{C_{0}}_{T,j}$ and
\begin{align}\label{3.2}(e_{T}-J_{T})s=p_{T}P_{T}^{
C_{0}}J_{T}s-J_{T}s+p^{\bot}_{T}P_{T}^{C_{0}}J_{T}s.
\end{align}
By Proposition \ref{t2.29}, for any $s\in F_{j}$,
\begin{align}\begin{split}\label{3.3}
\big\|(e_{T}-J_{T})s\big\|_{\textrm{E}^{0}} &\leqslant \big\|\big(
P_{T}^{C_{0}}\big)_{1}J_{T}s-J_{T}s\big\|_{\textrm{E}^{0}}
+\big\|\big(P_{T}^{C_{0}}\big)_{3}J_{T}s\big\|_{\textrm{E}^{0}}
\\ &\leqslant \frac{C}{\sqrt{T}}\big\|s\big\|_{\textrm{F}^{0}}.\end{split}\end{align}
Therefore,  $e_{T}|_{F_{j}}$ is injective for $T$ large enough.
Moreover, \begin{align}\label{3.5}\textup{dim} \ F_{j}=\textrm{dim} \
F^{C_{0}}_{T,j}=q_{j}.\end{align} Thus $e_{T}$ is an isomorphism
from $F_{j}$ onto $\textrm{F}_{T, j}^{C_{0}}$\,.
Since $g^{N}$ is $G$-invariant,
\begin{align}\label{3.5a}
\big|g^{-1}\cdot Z\big|_{g_{g^{-1}\cdot y}^{N}}=\big|Z\big|_{g_{y}^{N}},\ \ g\cdot \theta=\theta.
\end{align}
From (\ref{2.48}) we have $\alpha_{T}(y)=\alpha_{T}(g^{-1}\cdot y)$.
From (\ref{2.50}) and (\ref{3.5a}), we find that for any $s\in \textup{F}^{0}$,
\begin{align}
\big(g\cdot J_{T}s\big)(y, Z)=&g\cdot (J_{T}s)(g^{-1}\cdot y, g^{-1}\cdot Z)
 \nonumber \\=&
  \frac{1}{\sqrt{\alpha_{T}(g^{-1}\cdot y)}}
  \rho\Big(\big|g^{-1}\cdot Z\big|_{g_{g^{-1}\cdot y}^{N}}\Big)\textrm{exp}\Big(-\frac{T
  \big|g^{-1}\cdot Z\big|_{g_{g^{-1}\cdot y}^{N}}^{2}}{2}\Big)\times
  \nonumber \\&
   g\cdot s(g^{-1} \cdot y)\wedge g\cdot \theta_{g^{-1}\cdot y}
  \\=&
  \frac{1}{\sqrt{\alpha_{T}(y)}} \rho\Big(|Z|_{g_{y}^{N}}\Big)\textrm{exp}\Big(-\frac{T
  |Z|_{g_{y}^{N}}^{2}}{2}\Big)  \big(g\cdot s\big)(y)\wedge (g\cdot \theta)_{y}
  \nonumber \\=&J_{T}\big(g\cdot s\big)(y, Z).\nonumber
\end{align}
This shows that $g$ commutes with $J_{T}$.
Since $g$ commutes with $D_{T}$, $g$ commutes with $P^{C_{0}}_{T}$.
Therefore, $e_{T}$ is a $G$-map, i.e., it commutes with the action of $G$.
The proof of Lemma \ref{t3.1} is complete. \end{proof}

\begin{proof}[Proof of (\ref{1.9}) and (\ref{1.10})]
As $G$-representation space, $\mathrm{F}^{C_{0}}_{T,j}$ can be
decomposed as:
\begin{align}\label{3.6}
\mathrm{F}^{C_{0}}_{T,j}=\sum^{l_{0}}_{\alpha=1} \textrm{Hom}_{G}\big(V^{\alpha},
\mathrm{F}^{C_{0}}_{T,j}\big)\otimes V^{\alpha}. \end{align}
Then $\big(\textrm{Hom}_{G}(V^{\alpha}, \mathrm{F}^{C_{0}}_{T,\bullet}), d_{T}\big)$
is a $G$-subcomplex of the complex
$(\mathrm{F}^{C_{0}}_{T,\bullet}, d_{T})$. From (\ref{1.8}) and Lemma \ref{t3.1}, we see
that
\begin{align}\label{3.7}
\textrm{dim}\ \textrm{Hom}_{G}(V^{\alpha},
\mathrm{F}^{C_{0}}_{T,j})=d^{\alpha}_{j}.
\end{align}
From the Hodge theorem for complexes of finite-dimensional vector spaces, we know
that the $j$-th cohomology group of the complex $(\mathrm{F}^{C_{0}}_{T,
\bullet}\,, d_{T})$ is isomorphic to $\textup{Ker}
D^{2}_{T}|_{\Omega^{j}(M)}$. Thus
the dimension of the $j$-th cohomology group associated to the
complex $(\textrm{Hom}_{G}(V^{\alpha}, \mathrm{F}^{C_{0}}_{T, \bullet})\,, d_{T})$ is $b^{\alpha}_{j}$.
Then the inequalities (\ref{1.9}) and (\ref{1.10}) hold by standard algebraic techniques
\cite[Lemma 3.2.12]{Ma07}.
This completes the proof of Theorem \ref{t1.1}. \end{proof}

 \section{An application of equivariant Morse inequalities}
 \label{}
 In this Section, we apply the equivariant Morse inequalities to
prove Theorem \ref{t1.2}, which is a generalization of
\cite[Th.\,1]{Zadeh10}. We first prove a crucial Lemma.

\begin{lemma}\label{t4.1}
The following inequalities holds for $k=0,1,\ldots,m,$
\begin{align}\label{4.1d}
\sum_{j=0}^{k}(-1)^{k-j}\beta_{j}(M,N_{+})\leqslant \sum_{j=0}^{k}(-1)^{k}q_{j}.
\end{align}
The equality holds in (\ref{4.1d}) for $k=m$.
\end{lemma}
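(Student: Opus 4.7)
The plan is to deduce Lemma \ref{t4.1} from the equivariant Morse-Bott inequality of Theorem \ref{t1.1} applied to a closed manifold $\widehat{M}$ obtained from $M$ by iterated doubling, equipped with a natural $\mathbb{Z}_2\times\mathbb{Z}_2$-action whose isotypic components separate the boundary contributions.

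First I would form $M_{-}=M\cup_{N_{-}}M$ together with the involution $\sigma$ swapping the two copies and fixing $N_{-}$; this produces a compact manifold with boundary $N_{+}\sqcup N_{+}$. I would then form $\widehat{M}=M_{-}\cup_{\partial M_{-}}M_{-}$, a closed smooth manifold carrying a second commuting involution $\tau$ that swaps the two copies of $M_{-}$ and fixes their common boundary. The group $G=\langle\sigma,\tau\rangle\cong\mathbb{Z}_2\times\mathbb{Z}_2$ acts on $\widehat{M}$ with quotient $\widehat{M}/G\cong M$. The collar product forms $f(u,y)=\pm u^{2}/2+f_{\pm}(y)$ and $g^{TM}=g^{T\partial M}\oplus du^{2}$ allow $f$ and $g^{TM}$ to extend to a $G$-invariant smooth Morse-Bott function $\widehat{f}$ and a $G$-invariant Riemannian metric on $\widehat{M}$.

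Next I would enumerate the critical submanifolds of $\widehat{f}$ as $G$-orbits: each interior $B_{i}$ is a free $G$-orbit of four copies with index $n_{i}^{-}$; each $S_{+,k}\subset N_{+}$ is a size-two orbit with isotropy $\langle\tau\rangle$ and index $n_{+,k}^{-}$; each $S_{-,k}\subset N_{-}$ is a size-two orbit with isotropy $\langle\sigma\rangle$ and index $n_{-,k}^{-}+1$ (the extra $1$ coming from the maximum in the collar direction at $N_{-}$). The isotropy action on the orientation bundle of the negative normal bundle is trivial at $S_{+,k}$ (its negative normal lies inside $N_{+}$) but is the sign representation of $\sigma$ at $S_{-,k}$ (its negative normal contains the collar line on which $\sigma$ acts by $-1$). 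Writing $\rho_{st}$ for the irreducible $G$-representation with $\sigma\mapsto s$ and $\tau\mapsto t$, Frobenius reciprocity yields $\mathrm{Ind}_{\langle\tau\rangle}^{G}(\mathbf{1})=\rho_{++}\oplus\rho_{-+}$ for each $S_{+,k}$ and $\mathrm{Ind}_{\langle\sigma\rangle}^{G}(\mathrm{sgn})=\rho_{-+}\oplus\rho_{--}$ for each $S_{-,k}$, while the free orbit from $B_{i}$ contributes the regular representation. Consequently, only the interior critical submanifolds $B_{i}$ contribute to the $\rho_{+-}$-isotypic component, and they contribute exactly $q_{j}$ in degree $j$.

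The crucial cohomological identification is $H^{\bullet}(\widehat{M})^{\rho_{+-}}\cong H^{\bullet}(M,N_{+})$, which I would verify by applying the classical transfer/Mayer--Vietoris identification $H^{\bullet}(Y\cup_{\partial}Y)^{-}\cong H^{\bullet}(Y,\partial Y)$ to each of the two doublings in turn: $\sigma$-triviality encodes an absolute condition on $N_{-}$ (descending to $M_{+}=\widehat{M}/\sigma$), and $\tau$-sign then encodes a relative condition on $N_{+}$ for $M=M_{+}/\bar{\tau}$. With this identification and Theorem \ref{t1.1} applied to $(\widehat{M},\widehat{f},G)$ with $V^{\alpha}=\rho_{+-}$, one obtains
\begin{align*}
\sum_{j=0}^{k}(-1)^{k-j}\beta_{j}(M,N_{+})\leqslant \sum_{j=0}^{k}(-1)^{k-j}q_{j},
\end{align*}
with equality at $k=m$, establishing Lemma \ref{t4.1}. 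The main obstacle is verifying the identification $H^{\bullet}(\widehat{M})^{\rho_{+-}}\cong H^{\bullet}(M,N_{+})$ in parallel with the orientation-bundle bookkeeping for the fixed critical submanifolds: the sign twist induced on the collar line at $N_{-}$ is exactly what routes the $S_{-,k}$ contributions into $\rho_{-+}\oplus\rho_{--}$ rather than into $\rho_{+-}$, which is essential for the right-hand side to contain only $q_{j}$ and not $q_{-,j-1}$.
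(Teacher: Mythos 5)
Your plan matches the paper's proof in essence: both iterate the doubling construction to obtain a closed $\mathbb{Z}_2\times\mathbb{Z}_2$-manifold, check that the interior critical $G$-orbits contribute the regular representation while the boundary critical orbits contribute proper induced representations that miss exactly one isotype, and then apply Theorem \ref{t1.1} to that isotype. The differences are administrative but your variant is a bit more streamlined. The paper doubles first along $N_+$, then along $N'_-$, passes to $-f$, and applies the $\alpha=4$ (sign--sign) inequality, which, after the Poincar\'e-duality identifications $b^4_j=\beta_{m-j}(M,N_+)$ and $d^4_j=q_{m-j}$, requires undoing the $j\mapsto m-j$ degree shift to reach \eqref{4.1d}. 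You double first along $N_-$, stay with $f$, and isolate your $\rho_{+-}$ isotype (trivial on the $N_-$-doubling involution, sign on the $N_+$-doubling involution), which in the paper's labeling amounts to using $W^3$ directly; this gives $\beta_j(M,N_+)$ on the left and $q_j$ on the right with no detour. Your orbit/isotropy bookkeeping --- free orbits for interior $B_i$, $\mathrm{Ind}_{\langle\tau\rangle}^{G}(\mathbf{1})=\rho_{++}\oplus\rho_{-+}$ for $S_{+,k}$, $\mathrm{Ind}_{\langle\sigma\rangle}^{G}(\mathrm{sgn})=\rho_{-+}\oplus\rho_{--}$ for $S_{-,k}$, with the sign twist coming from $u\mapsto-u$ acting on the collar line inside $N^-$ at $N_-$ --- is correct and recovers the content of \eqref{4.9}--\eqref{4.12}, as does the identification $H^{\bullet}(\widehat M)^{\rho_{+-}}\cong H^{\bullet}(M,N_+)$ obtained from the two successive doubling decompositions, which parallels \eqref{4.4} and the $b^3_j$ entry in \eqref{4.7}.
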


\begin{proof}
Set \begin{align}\label{4.3} M_{1}=M\cup_{N_{+}}(-M),  \ \
M_{2}=M_{1}\cup_{N^{\prime}_{-}}(-M_{1}),\end{align} where
$(-M), (-M_{1})$ are copies of $M$ and $M_{1}$, respectively, and
$N^{\prime}_{-}$ is the boundary of $M_{1}$, i.e.,
$N_{-}^{\prime}=N_{-}\sqcup (-N_{-})$.

Denote by $\mathbb{R}^{+}, \mathbb{R}^{-}$ the trivial and the nontrivial one-dimensional
real $\mathbb{Z}_{2}$-representation, respectively.
It is well-known that, as $\mathbb{Z}_{2}$-representation spaces,
$H^{j}(M_{2})$ and $H^{j}(M_{1})$ have the following
decompositions: 
\begin{align}\begin{split}\label{4.4}
{H}^{j}(M_{2})&=H^{j}(M_{1})\cdot \mathbb{R}^{+}\oplus H^{j}(M_{1},
N^{\prime}_{-})\cdot \mathbb{R}^{-},  \\ {H}^{j}(M_{1})&=H^{j}(M)\cdot
\mathbb{R}^{+}\oplus H^{j}(M, N_{+})\cdot \mathbb{R}^{-}.  \end{split}\end{align}
Let $\tau_{1}$ and $\tau_{2}$ be the flip maps of
$M_{1}$ and $M_{2}$, respectively. Let $e$ and $g$ be the
trivial and nontrivial element, respectively, in $\mathbb{Z}_{2}$, which
can be viewed as a multiplication group, i.e., $g^{2}=e=e^{2}$.
Then $\mathbb{Z}_{2}\times \mathbb{Z}_{2}$ acts naturally
on $M_{2}$ by
\begin{align}\label{4.5}
(e, g)\cdot x=\tau_{1}(x),   \ \ \ (g, e)\cdot x=\tau_{2}(x),\ \ \
 \  \forall \ x\in M_{2}. \end{align}
Let $\{W^{\alpha}\}^{4}_{\alpha=1}$  be the set of non-isomorphic
irreducible representations of $\mathbb{Z}_{2}\times \mathbb{Z}_{2}$. As vector space, $W^{j}=\mathbb{R}$
but $(e, g)$ acts as $\textup{Id}$ on $W^{1}, W^{2}$ and acts as $-\textup{Id}$ on $W^{3}, W^{4}$;
besides $(g, e)$ acts as $\textup{Id}$ on $W^{1}, W^{3}$ and acts as $-\textup{Id}$ on $W^{2}, W^{4}$.

Recall that $b^{\alpha}_{j}$ and $d^{\alpha}_{j}$ are defined  in (\ref{1.8}).
Using (\ref{4.4}) and the Poincar\'e duality theorem
for manifolds with boundary
\cite[Chap.\,5,\,Prop.\,9.12]{Taylor96}, we calculate directly,
\begin{align}  \label{4.7} b^{1}_{j}=\beta_{j}(M), \ b^{2}_{j}=\beta_{m-j}(M),
\ b^{3}_{j}=\beta_{j}(M, N_{+}), \ b^{4}_{j}=\beta_{m-j}(M, N_{+}).
\end{align}
To clarify our statement, we replace
the vector space $F_{j}, F_{a+, j}$ and $F_{r-, j}$
by $F_{j}(f), F_{a+, j}(f)$ and $F_{r-, j}(f)$, respectively.
The Poincar\'e duality theorem yields
\begin{align}\label{4.7a}
F_{j}(-f)\simeq F_{m-j}(f).
\end{align}
By considering the Morse-Bott function $-f$ instead of $f$, we obtain that
\begin{align}\begin{split} \label{4.8} d^{1}_{j}=q_{m-j}+\textup{dim}_{\mathbb{R}}
F_{r-, j-1}(-f)+\textup{dim}_{\mathbb{R}}
F_{a+, j}(-f)
, & \ \ d^{2}_{j}=q_{m-j}+\textup{dim}_{\mathbb{R}}
F_{r-, j-1}(-f);
  \\ d^{3}_{j}=q_{m-j}+\textup{dim}_{\mathbb{R}}
F_{a+, j}(-f), & \ \ d^{4}_{j}=q_{m-j}.
  \end{split} \end{align}
Now we  prove (\ref{4.8}) as follows.
For $w\in F_{j}(-f)$, let $\big\{w\big\}$ be the real line generated by $w$. Set
\begin{align}\label{4.9}
W=\big\{w\big\}\oplus\big\{\tau_{1}(w)\big\}\oplus
\big\{\tau_{2}(w)\big\}\oplus
\big\{\tau_{1}\tau_{2}(w)\big\}. \end{align} Then $W$ is a 4-dimensional vector space spanned by
$\big\{w\big\}, \big\{\tau_{1}(w)\big\}, \big\{\tau_{2}(w)\big\}$ and $\big\{\tau_{1}\tau_{2}(w)\big\}$.
Note that $\mathbb{Z}_{2}\times \mathbb{Z}_{2}$ acts naturally on $W$,
which can be rewritten as
\begin{align}\begin{split}\label{4.10} W=&
\big\{w+\tau_{1}(w)+\tau_{2}(w)+\tau_{2}\tau_{1}(w)\big\} \oplus
\big\{w+\tau_{1}(w)-\tau_{2}(w)-\tau_{2}\tau_{1}(w)\big\}
\\&\oplus \big\{w-\tau_{1}(w)+\tau_{2}(w)-\tau_{2}\tau_{1}(w)\big\} \oplus
\big\{w-\tau_{1}(w)-\tau_{2}(w)+\tau_{2}\tau_{1}(w)\big\}.
\end{split}\end{align}
Moreover, the 1-dimensional $\mathbb{Z}_{2}\times\mathbb{Z}_{2}$-space
$\big\{w+\tau_{1}(w)+\tau_{2}(w)
+\tau_{2}\tau_{1}(w)\big\}$ $\Big($resp.
$\big\{w+\tau_{1}(w)-\tau_{2}(w)
-\tau_{2}\tau_{1}(w)\big\}$, resp.
$\big\{w-\tau_{1}(w)+\tau_{2}(w)
-\tau_{2}\tau_{1}(w)\big\}$, resp.
$\big\{w-\tau_{1}(w)
-\tau_{2}(w)+\tau_{2}\tau_{1}(w)\big\}\Big)$ is
isomorphic to $W^{1}$ (resp $W^{2}$, resp. $W^{3}$, resp. $W^{4}$)
as $\mathbb{Z}_{2}\times\mathbb{Z}_{2}$ representation space.
Thus as a $\mathbb{Z}_{2}\times \mathbb{Z}_{2}$
representation space,
\begin{align}\label{4.11}
W\simeq W^{1}\oplus W^{2}\oplus W^{3}\oplus W^{4}.
\end{align}

For nondegenerate critical manifolds on the boundary, we have for $w\in F_{r-, j-1}(-f), w'\in F_{a+, j}(-f)$,
\begin{align}\begin{split}\label{4.12}
\big\{w \big\}\oplus \big\{\tau_{2}(w)\big\}
&=\big\{w+\tau_{2}(w)\big\}
\oplus
\big\{w-\tau_{2}(w)\big\};
\\ \big\{w'\big\}\oplus \big\{\tau_{1}(w')\big\}&=
\big\{w'+\tau_{1}(w')\big\}\oplus
\big\{w'-\tau_{1}(w')\big\}.
\end{split}\end{align}
From (\ref{4.7a}), (\ref{4.11}) and (\ref{4.12}), one get (\ref{4.8}) immediately.

Applying the equivariant Morse inequalities (\ref{1.9}) to
$\alpha=4$, we deduce that
\begin{align}\label{4.13}
\sum_{j=0}^{k}(-1)^{k-j}\beta_{m-j}(M,N_{+})\leqslant \sum_{j=0}^{k}(-1)^{k-j}q_{m-j}.
\end{align} From (\ref{4.13}), one verifies directly that
\begin{align}\label{4.14}
\sum_{j=0}^{k}(-1)^{k-j}\beta_{j}(M,N_{+})\leqslant \sum_{j=0}^{k}(-1)^{k-j}q_{j}.
\end{align}
One verifies easily that the equality in (\ref{4.14}) holds when $k=m$.
The proof of Lemma \ref{t4.1} is complete.
\end{proof}

\begin{proof}[Proof of (\ref{1.13})]

We now consider the Mayer-Vietories sequence \cite[pp.\,185]{Massey} associated with the
triad $(M, N_{+}, N_{r+})$:
\begin{align}\label{4.15}
\ldots\rightarrow {H}^{j-1}(N_{a+})
\rightarrow {H}^{j}(M, N_{+}) \rightarrow {H}^{j}(M, N_{r+})
\rightarrow {H}^{j}(N_{a+}) \rightarrow \ldots
\end{align}
From (\ref{4.15}) and \cite[Lemma 3.2.12]{Ma07}, we get
\begin{align}\label{4.16}\sum^{k}_{j=0}(-1)^{k-j}[\beta_{j}(N_{a+})-\beta_{j}(M, N_{r+})+\beta_{j}(M, N_{+})]
=\textrm{dim Im}\ \delta_{1}^{k}, \end{align} where
$\delta_{1}^{k}$ denotes the connecting morphism  ${H}^{k}(N_{a+})\rightarrow
{H}^{k+1}(M, N_{+})$ in the long exact sequence (\ref{4.15}).

Next we consider the triad $(M, N_{r}, N_{r+})$:
\begin{align}\label{4.17}\ldots\rightarrow {H}^{j-1}(N_{r-}) \rightarrow
{H}^{j}(M, N_{r}) \rightarrow {H}^{j}(M, N_{r+}) \rightarrow
{H}^{j}(N_{r-}) \rightarrow \ldots \end{align}
From (\ref{4.17}) and \cite[Lemma 3.2.12]{Ma07}, we find that
\begin{align}\label{4.18}\sum^{k}_{j=0}(-1)^{k-j}
\big[\beta_{j}(M, N_{r+})-\beta_{j}(M,
N_{r})+\beta_{j-1}(N_{r-})\big] =\textrm{dim Im}\
\delta_{2}^{k},\end{align} where $\delta_{2}^{k}$ denotes the morphism
${H}^{k}(M, N_{r+})\rightarrow
{H}^{k}( N_{r-})$ in the long exact sequence (\ref{4.17}) induced by the inclusion
$(N_{r}, N_{r+}) \hookrightarrow (M, N_{r+}).$

From (\ref{4.14}), (\ref{4.16}) and (\ref{4.18}), we get that for $k=0,1,\ldots,m,$
\begin{align}\label{4.1}
\sum_{j=0}^{k}(-1)^{k-j}\beta_{j}(M,N_{r})\leqslant \sum_{j=0}^{k}(-1)^{k}\nu_{j},
\end{align} where
\begin{align}\label{4.2}
\nu_{j}=q_{j}+\beta_{j}(N_{a+})+\beta_{j-1}(N_{r-}).
\end{align}
The equality holds in (\ref{4.1}) for $k=m$.

We now directly apply the degenerate Morse inequalities \cite[(2.101)]{Bismut86} to
the closed submanifolds $N_{a+}$ and $N_{r-}$ respectively:  for
$0\leqslant k \leqslant m-1,$
\begin{align}\label{4.22}\sum_{j=0}^{k}(-1)^{k-j}\beta_{j}(N_{a+})\leqslant
\sum_{j=0}^{k}(-1)^{k-j}q_{a+, j}, \ \ \sum_{j=0}^{k}(-1)^{k-j}\beta_{j}(N_{r-})\leqslant
\sum_{j=0}^{k}(-1)^{k-j}q_{r-, j}\,, \end{align} with equality
for $k=m-1$. Note that
\begin{align}\label{4.23}\beta_{m}(N_{a+})=0=q_{a+,m}, \ \ \beta_{m}(N_{r-})=0=q_{r-,m}. \end{align}
Due to
(\ref{4.22}) and (\ref{4.23}),
the equality in (\ref{4.22}) holds also for $k=m$.

Now (\ref{1.13}) follows from (\ref{4.1}) and (\ref{4.22}). One verifies easily that
the equality in (\ref{1.13}) holds when $k=m$.
This finishes the proof of Theorem \ref{t1.2}. \end{proof}

\noindent
\textbf{\emph{Acknowledgements.}}
The author would like to thank Professors Xiaonan Ma and George Marinescu for their kind advices.

\end{document}